\def\theequation{\@arabic\c@equation}
\numberwithin{equation}{section}
\newcommand{\we}{\wedge}
\newcommand{\Dd}{\mathbb{D}}
\newcommand{\Tt}{\mathbb{T}}
\newcommand{\Cc}{\mathbb{C}}
\newcommand{\telwe}{\dot{\wedge}}
\newcommand{\eiu}{e^{i\theta}}
\DeclareMathOperator\ran{ran}
\DeclareMathOperator\spn{span}
\DeclareMathOperator\p{P}
\DeclareMathOperator\Pls{PLS}
\DeclareMathOperator\Poc{POC}
\def\be{\begin{equation}}
\def\ee{\end{equation}}
\def\s0{s_0}
\def\p0{p_0}
\newtheorem{theorem}{Theorem}[section]
\newtheorem{lemma}[theorem]{Lemma}
\newtheorem{proposition}[theorem]{Proposition}
\newtheorem{definition}[theorem]{Definition}
\newtheorem{remark}[theorem]{Remark}
\newtheorem{example}[theorem]{Example}
\newtheorem{fact*}{Fact}
\newcommand\half{{\tfrac 12}}
\newcommand\id{{\mathrm{id}}}
\newcommand{\T}{\mathbb{T}}
\newcommand{\C}{\mathbb{C}}
\newcommand{\ip}[2]{\left\langle #1, #2 \right\rangle}
\newcommand{\inv}{^{-1}}
\renewcommand\phi{\varphi}
\newcommand\la{\lambda}
\newcommand\beq{\begin{equation}}
\newcommand\eeq{\end{equation}}
\newcommand\blue{\color{blue}}
\newcommand\black{\color{black}}
\newcommand\bbm{\begin{bmatrix}}
	\newcommand\ebm{\end{bmatrix}}
\newcommand\bpm{\begin{pmatrix}}
	\newcommand\epm{\end{pmatrix}}
\numberwithin{equation}{section}
\theoremstyle{definition}
\begin{document}
	\title[Pointwise creation operators] {Exterior powers and pointwise creation operators}

	\author{ Dimitrios Chiotis}
	\address{School of Mathematics,  Statistics and Physics, Newcastle University, Newcastle upon Tyne
		NE\textup{1} \textup{7}RU, U.K.}
	\email{chiotisd@gmail.com}
	\author{Zinaida A. Lykova}
	\address{School of Mathematics,  Statistics and Physics, Newcastle University, Newcastle upon Tyne
		NE\textup{1} \textup{7}RU, U.K.}
	\email{Zinaida.Lykova@ncl.ac.uk}
	
	\author{ N. J. Young}
	\address{School of Mathematics, Statistics and Physics, Newcastle University, Newcastle upon Tyne NE1 7RU, U.K.
		{\em and} School of Mathematics, Leeds University,  Leeds LS2 9JT, U.K.}
	\email{Nicholas.Young@ncl.ac.uk}


	\date{7th October 2020}
	\@namedef{subjclassname@2020}{\textup{2020} Mathematics Subject Classification}
	\subjclass[2020]{15A75, 47B35,  30J99 }
	

\thanks{\blue N.J. Young \black is the corresponding author.}

	\begin{abstract}  We develop a theory of pointwise wedge products of vector-valued functions on the circle and the disc, and 
obtain results which give rise to a new approach to the analysis of the matricial Nehari problem.
We investigate properties of pointwise creation operators and pointwise orthogonal complements in the context of operator theory and the study of vector-valued analytic functions on the unit disc.\\

	\end{abstract} 
\keywords  {Creation operators, exterior products,  wedge products, pointwise wedge products}

	\maketitle

	\sloppy
	\fussy
	
	\pagenumbering{arabic}
	\setcounter{page}{1}

\section{Introduction}\label{intro}
 The wedge product of Hilbert spaces, though a long established theory (see, for example \cite{Sim1,Sim2,depillis,Greub}), deserves in our view to be better exploited in operator theory than it has been hitherto.  In this paper we put forward a new approach to some aspects of the analysis of $E$-valued functions on the unit disc $\Dd$  and on the unit circle $\Tt$, where $E$ is a separable Hilbert space. Our first application of this idea was to the problem of the superoptimal analytic approximation of a continuous matrix-valued function on the unit circle \cite{YCL2020}. The computation of such approximants arises naturally in the context of the classical ``Nehari problem", and also in its application to the ``robust stabilization problem" in control engineering.  In \cite{YCL2020} we give a new algorithm for the construction of the unique superoptimal analytic approximant of a continuous
matrix-valued function on the unit circle, making use of exterior powers of operators in preference to spectral or  Wiener-Masani factorizations.
This algorithm is parallel to the construction of \cite{Constr}, but has the advantage that it
requires only the spectral factorisation of \emph{scalar} functions on the circle, together with the calculation of singular-value decompositions, and otherwise requires only rational arithmetic.  In the algorithm we make use of pointwise creation operators and pointwise orthogonal complements. 

We show how the Hilbert space geometry of the Hardy space $H^2(\Dd, E)$, where $E$ is a separable Hilbert space, interacts with the pointwise geometry of   $H^2(\Dd, E)$, that is, the geometry of $E$ over each point of $\Tt$ or $\Dd$. We work with  the completion of the algebraic tensor product of Hilbert spaces, namely the Hilbert tensor product (see \cite{Dixmier} and \cite{Tensor}), and, in particular, with its closed linear subspace of antisymmetric tensors.  The space of all antisymmetric tensors $\we^p E $, also called a {\it wedge} or {\it exterior product}, is a closed linear subspace of the $p$-fold Hilbert tensor product $\otimes_H^pE$, see Section \ref{exterior}.

In Section \ref{point_w_p}, we define the pointwise wedge product of maps that are defined either on the unit disc or on the unit circle and take values in Hilbert spaces. We illustrate the fact that  some classical results from function theory extend to the pointwise wedge product. An exemplary result is Proposition \ref{a.19} which asserts that, for any separable Hilbert space $E$,  the pointwise wedge product of two functions in the Hardy space $H^2(\Dd,E)$ is an element of $H^1(\Dd,\we^2E).$  We study pointwise creation operators, which are of the form 
$$C_\xi\colon H^2(\Dd,E)\to H^2(\Dd,\we^2E), f\mapsto \xi\telwe f,$$ 
where $\xi\colon \Dd \to E$ is a bounded analytic function, and the symbol $\xi\telwe f$ denotes the function
\[
(\xi \telwe f)(z)=\xi(z)\we f(z) \mbox{ for all }z \in \Dd.
\]
Analogous notation applies when $\xi, f$ are $E$-valued functions defined almost everywhere on $\Tt$.
  We connect the kernel of $C_\xi$ to the pointwise linear span and pointwise orthogonal complement of $\xi.$ 

\begin{definition}\label{H-infty} Let $E$ be a separable Hilbert space.
 $H^{\infty}(\Dd,E)$ denotes the space of bounded analytic $E$-valued functions $Q$ on the unit disk  with supremum norm: 
  $$\|Q\|_{H^{\infty}} \stackrel{\emph{def}}{=} \|Q\|_{\infty} \stackrel{\emph{def}}{=} \sup\limits_{z \in \mathbb{D}}\|Q(z)\|_{E}.$$
 \end{definition}

 In Section 4 we prove, among other things, the following two statements.\\

\noindent {\bf Theorem}  \ref{xjclosed}. 
{\it 
Let $E$ be a separable Hilbert space, let  $\xi_0, \xi_1, \cdots, \xi_j \in H^\infty(\Dd, E)$. Suppose that the set $\{\xi_i(z)\}_{i=0}^{j}$ is  orthonormal in $E$ for almost every $z \in \Tt.$ Then 
 $$ 
\xi_0 \telwe \dots \telwe \xi_j \telwe H^2(\Dd,E)
$$
is a closed subspace of $H^2(\Dd,\we^{j+2}E).$
}

Let $F$ and $E$ be Hilbert spaces. The Banach space of bounded linear operators from $F$ to $E$ with the operator norm is denoted by $\mathcal{B}(F,E)$.

\begin{definition}\label{xistar}
Let $E$ be a separable Hilbert space and let $\xi, \eta \in L^\infty(\T,E)$.
We define $\xi \eta^* \in L^\infty(\T,\mathcal{B}(E,E))$ by 
\[
(\xi \eta^*)(z) x =\langle x, \eta(z) \rangle \ \xi(z) \mbox{ for all } x \in E \ \mbox{and for almost every } \ z \ \mbox{on} \ \T.
\]
\end{definition}

Thus $\xi \eta^*(z)$ is the operator of rank one on $E$ that is sometimes denoted by $\xi(z) \otimes \eta(z)$ (for example, in \cite[equation (1.17)]{AMY}).

\noindent {\bf Theorem}  \ref{cx*cx}.
{\it
Let $E$ be a separable Hilbert space. Let $\xi \in H^\infty(\Dd,E)$ be an inner function. Then, for any $h \in H^2(\Dd,E),$
$$C_\xi^* C_\xi h =  P_{+} \alpha_\xi, $$ where $\alpha_\xi= h - \xi \xi^* h$ and $P_+ \colon L^2(\Tt,E)\to H^2(\Dd,E)$ is the orthogonal projection. Moreover 
$$ C_\xi^* C_\xi h = h -T_{\xi\xi^*}h,$$ where $T_{\xi\xi^*}\colon H^2(\Dd,E)\to H^2(\Dd,E)$ is the  Toeplitz operator with symbol $\xi\xi^*$. 
}

 Thus $C_\xi^*C_\xi$ is  the Toeplitz operator with symbol $1-\xi\xi^*.$ Finally, we show that although $C_\xi$ is  an isometry from the pointwise orthogonal complement of $\xi$ to $H^2(\Dd,\we^2E),$ a simple example proves that $C_\xi$ fails to be a partial isometry.

\section{Exterior powers of Hilbert spaces and operators}\label{exterior}

In this Section  we recall the established notion of the exterior power, or wedge product, of Hilbert spaces. One can find definitions and properties of wedge products in  \cite{depillis}, \cite{Greub}, \cite{pavan}, \cite{Sim1,Sim2}
 and \cite{Wed}.
Here we present a concise version of this theory which we need for the development of  pointwise wedge products of vector-valued functions on the circle and the disc.
We shall assume the notion of the algebraic tensor product of linear spaces as given, for example, in \cite{Tensor}.

For $E$ a Hilbert space, we consider the completion of the p-fold algebraic tensor product, denoted by $\otimes_H^pE,$ with respect to the norm induced by the inner product \eqref{tensorinner}. We define an action of permutation operators on tensor products and examine various properties. 
These permutation operators generate two different types of tensors, symmetric and antisymmetric, and we focus on the properties of the antisymmetric tensors. The space of all antisymmetric tensors, also called wedge or exterior products, in $\otimes_H^pE$ is denoted by $\we^p E$ and in Theorem \ref{a.10} we prove it is a closed linear subspace of $\otimes_H^pE.$  

In the following $E$ denotes a Hilbert space.

\begin{definition}
$\otimes^{p}E$ is the $p$-fold algebraic
tensor product 
of $E,$ spanned by tensors of the form $x_1 \otimes x_2 \otimes \dots \otimes x_p,$ where $ x_j \in E$ for
$j=1, \dots,  p.$
\end{definition}

\begin{definition} An inner product on $\otimes^{p}E$
 is given  on elementary tensors by
\begin{equation}\label{tensorinner}
\langle x_1 \otimes x_2 \otimes \dots \otimes x_p, 
y_1 \otimes y_2 \otimes \dots \otimes y_p \rangle_{\otimes^{p}E}=p!\langle x_1 , y_1\rangle_E \cdots \langle x_p, y_p\rangle_E
\end{equation}
for any $x_1, \dots, x_p, y_1, \dots, y_p \in E$,
and is extended to 
$\otimes^p E$ by sesqui-linearity. 
\end{definition}

\begin{definition}
$\otimes_H^p E$ is the completion of $\otimes^p E$ with respect to the norm\linebreak $\|u\| = \langle u, u\rangle^{1/2}_{\otimes^{p}E},$ for $u \in \otimes^p E.$\end{definition}

Observe that the inner product \eqref{tensorinner}, in contrast to the majority of the ones included in the bibliography, invokes a multiple of $p!$\;. The reason for this choice will be apparent in Theorem \ref{a.6}.

In order to introduce antisymmetric tensors, we need to consider the action of the following permutation operators on tensors.

\begin{definition}\label{sigma}
Let $\mathfrak{S}_p$ denote the \index{symmetric group}symmetric group on $\{1,\dots,p\},$ with the operation of composition.
 For $\sigma \in \mathfrak{S}_p$, we define 
\[
S_\sigma \colon \otimes^p E \to \otimes^p E
\]
 on elementary tensors by 
$$\displaystyle S_\sigma(x_1 \otimes x_2 \otimes \dots \otimes x_p)=x_{\sigma(1)} \otimes x_{\sigma(2)} \otimes \dots \otimes x_{\sigma(p)},$$
and we extend $S_\sigma$ to  $\otimes^p E$ by linearity, that is, for $u= \sum_{i=1}^n \la_i x_1^i \otimes \cdots \otimes x_p^i,$ we define 
$$
S_\sigma (u) = \sum\limits_{i=1}^{n} \la_i S_\sigma (x_1^i \otimes \cdots \otimes x_p^i ).
$$
for any $x_j^i \in E$ and $\la_i\in\C$.
\end{definition}

\begin{remark}
 $(\mathfrak{S}_p, \circ)$ is a group, and so, for every permutation $\sigma \in \mathfrak{S}_p,$ there exists $\sigma^{-1} \in \mathfrak{S}_p$ such that 
 $$\sigma \circ \sigma^{-1} = \id = \sigma^{-1} \circ \sigma,$$ where $id\in \mathfrak{S}_p$ is the identity map on $\{1,\dots,p\}.$ 

\end{remark}

Elements of $\mathfrak{S}_p$ induce unitary operators on $\otimes^p_H E$ in an obvious way.
\begin{proposition}\label{a.3}
Let $E$ be a Hilbert space, and let $p$ be a positive integer. Then, for any $\sigma\in \mathfrak{S}_p$,
 $S_\sigma$ is a linear operator on the normed space $(\otimes^p E , \| \cdot \|)$, which extends to an isometry $\mathbf{S}_\sigma$ on $(\otimes^p_H E,\| \cdot \|) $. Furthermore, $\mathbf{S}_\sigma$ is a unitary operator on $\otimes_H^p E $. 
\end{proposition}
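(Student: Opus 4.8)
The plan is to verify everything first on elementary tensors and then propagate by sesquilinearity, density and continuity. To begin with, $S_\sigma$ is well defined and linear on $\otimes^p E$: the map $(x_1,\dots,x_p)\mapsto x_{\sigma(1)}\otimes\cdots\otimes x_{\sigma(p)}$ from $E^p$ to $\otimes^p E$ is $p$-linear, so by the universal property of the algebraic tensor product it induces a unique linear map on $\otimes^p E$ that agrees with the formula of Definition \ref{sigma} on elementary tensors; in particular the value $S_\sigma(u)$ does not depend on the chosen representation $u=\sum_i\la_i\, x_1^i\otimes\cdots\otimes x_p^i$.

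The crux is that $S_\sigma$ preserves the inner product \eqref{tensorinner}. On elementary tensors, for $x_1,\dots,x_p,y_1,\dots,y_p\in E$,
\[
\langle S_\sigma(x_1\otimes\cdots\otimes x_p),\, S_\sigma(y_1\otimes\cdots\otimes y_p)\rangle = p!\prod_{k=1}^{p}\langle x_{\sigma(k)},y_{\sigma(k)}\rangle = p!\prod_{j=1}^{p}\langle x_j,y_j\rangle ,
\]
where the last equality holds because $\sigma$ is a bijection of $\{1,\dots,p\}$ and multiplication in $\C$ is commutative; hence this quantity equals $\langle x_1\otimes\cdots\otimes x_p,\, y_1\otimes\cdots\otimes y_p\rangle$. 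Extending by sesquilinearity in each argument gives $\langle S_\sigma u,S_\sigma v\rangle=\langle u,v\rangle$ for all $u,v\in\otimes^p E$, so $S_\sigma$ is an isometry of the pre-Hilbert space $(\otimes^p E,\|\cdot\|)$, in particular bounded and uniformly continuous. Since $\otimes^p E$ is dense in its completion $\otimes_H^p E$, the standard extension principle for uniformly continuous maps produces a unique continuous $\mathbf{S}_\sigma\colon\otimes_H^p E\to\otimes_H^p E$ extending $S_\sigma$; choosing $u_n\in\otimes^p E$ with $u_n\to u$ and passing to the limit in $\|S_\sigma u_n\|=\|u_n\|$ shows $\|\mathbf{S}_\sigma u\|=\|u\|$ for every $u\in\otimes_H^p E$, so $\mathbf{S}_\sigma$ is an isometry.

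For unitarity I would exhibit $\mathbf{S}_{\sigma^{-1}}$ as a two-sided inverse. A direct check on elementary tensors, using $\sigma\circ\sigma^{-1}=\sigma^{-1}\circ\sigma=\id$, gives $S_\sigma\circ S_{\sigma^{-1}}=S_{\sigma^{-1}}\circ S_\sigma=\id_{\otimes^p E}$, and by linearity this persists on all of $\otimes^p E$; passing to the continuous extensions yields $\mathbf{S}_\sigma\mathbf{S}_{\sigma^{-1}}=\mathbf{S}_{\sigma^{-1}}\mathbf{S}_\sigma=\id_{\otimes_H^p E}$. Thus $\mathbf{S}_\sigma$ is a surjective isometry of the Hilbert space $\otimes_H^p E$; by the polarisation identity an isometry preserves the inner product, so $\mathbf{S}_\sigma^{*}\mathbf{S}_\sigma=\id$, and surjectivity then forces $\mathbf{S}_\sigma\mathbf{S}_\sigma^{*}=\id$ as well, i.e. $\mathbf{S}_\sigma$ is unitary. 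I anticipate no real obstacle here; the only points demanding a little care are the index bookkeeping in the composition $S_\sigma\circ S_{\sigma^{-1}}$ (and fixing the convention $S_\sigma S_\tau=S_{\tau\circ\sigma}$) and making the density/continuity extension step precise rather than merely gesturing at it.
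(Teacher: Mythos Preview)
Your proof is correct and follows essentially the same route as the paper. The only cosmetic difference is that the paper first computes the adjoint explicitly, showing $S_\sigma^*=S_{\sigma^{-1}}$ on elementary tensors and deducing both $S_\sigma^*S_\sigma=I$ (isometry) and $S_\sigma S_\sigma^*=I$ (surjectivity) from that, whereas you verify $\langle S_\sigma u,S_\sigma v\rangle=\langle u,v\rangle$ directly and then exhibit $\mathbf{S}_{\sigma^{-1}}$ as a two-sided inverse; the underlying computation (reindexing the product $\prod_j\langle x_j,y_j\rangle$ by the bijection $\sigma$) is identical.
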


\begin{proof}
It is easy to check that $S_\sigma$ is linear.
For any elementary tensors $w=x_1 \otimes x_2 \otimes \dots \otimes x_p,$\linebreak
 $v=y_1\otimes y_2\otimes \cdots y_p$  by the definition of the inner product on $\otimes^pE,$
\begin{align*}
\ip{S_{\sigma}^*w}{v}_{\otimes^p E} &= \ip{w}{y_{\sigma(1)}\otimes \dots \otimes y_{\sigma(p)}}_{\otimes^p E} \\
	&=p! \prod\limits_{j=1}^p \ip{x_j}{y_{\sigma(j)}}_E  \\
	&=p! \prod\limits_{j=1}^p \ip{x_{\sigma\inv(j)}}{y_j}_E \\
	&=\ip{S_{\sigma\inv}w}{v}_{\otimes^p E} .
\end{align*}
Hence $S_\sigma^*= S_{\sigma\inv}$, thus
\[
S_\sigma^*S_\sigma = S_{\sigma\inv} S_\sigma = I,
\]
the identity operator on $\otimes^p E$,
and therefore  $S_\sigma$ is an isometric linear self-map of  $\otimes^pE$.   Likewise, $S_\sigma S_\sigma^* = I$, and so $S_\sigma$ is also a surjective self-map of $\otimes^p E$.

Thus one can extend $S_\sigma$ by continuity to an isometric linear self-map $\mathbf{S}_\sigma$ of the completion $\otimes_H^p E$ of $\otimes^p E$.  Since
$\mathbf{S}_\sigma$ is isometric, its range is complete, hence closed in $\otimes_H^p E$.  Since the range of $\mathbf{S}_\sigma$ contains that of $S_\sigma$, $\ran \mathbf{S}_\sigma = \otimes_H^p E$. Being both surjective and isometric, 
 $\mathbf{S}_\sigma$ is a unitary operator on $\otimes^p_H E$.
\end{proof}
Henceforth we shall denote the extended operator $\mathbf{S}_\sigma$ by $S_\sigma$.

\begin{definition}\label{a.4}
A tensor $u \in \otimes_H^{p}E$ is said to be \emph{symmetric} if $S_\sigma(u)=u$ for all $\sigma \in \mathfrak{S}_p.$ 
\index{symmetric tensor}
A tensor $u \in \otimes_H^{p}E$ is said to be \emph{antisymmetric} if $u=\epsilon_{\sigma}S_\sigma u$ for all $\sigma \in \mathfrak{S}_p,$
where $\epsilon_{\sigma}$ is the signature of $\sigma.$
\end{definition}

Note that $\epsilon_{\sigma \circ \sigma^{-1}} = \epsilon_\sigma \epsilon_{\sigma^{-1}} =1, $ and hence $\epsilon_\sigma=  \epsilon_{\sigma^{-1}}.$
 
\begin{definition}
The space of all antisymmetric tensors in $\otimes_H^{p}E$ will be denoted by $\wedge^p E$. 
\end{definition}

\begin{theorem}\label{a.10}
Let $E$ be a Hilbert space. Then $\wedge^{p}E$ is a closed linear subspace of the Hilbert space $\otimes_H^{p}E$ for any $p \geq 2.$
\end{theorem}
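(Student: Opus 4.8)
The plan is to exhibit $\wedge^p E$ as the range of a self-adjoint idempotent (the antisymmetrization projection) on $\otimes_H^p E$, since the range of an orthogonal projection is automatically a closed linear subspace. Concretely, I would define on the algebraic tensor product the operator
\[
A_p = \frac{1}{p!}\sum_{\sigma \in \mathfrak{S}_p} \epsilon_\sigma\, S_\sigma,
\]
which is a finite linear combination of the isometries $S_\sigma$ and hence extends (by Proposition \ref{a.3}) to a bounded operator on $\otimes_H^p E$, still denoted $A_p$. The key algebraic facts to verify are: (i) $A_p$ is idempotent, $A_p^2 = A_p$; (ii) $A_p$ is self-adjoint; and (iii) $\ran A_p = \wedge^p E$.

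For (i), I would use the group structure of $\mathfrak{S}_p$: composing $S_\sigma S_\tau = S_{\tau \circ \sigma}$ (one must be careful about the order, which follows directly from Definition \ref{sigma}) together with multiplicativity of the signature, $\epsilon_{\tau\circ\sigma} = \epsilon_\tau \epsilon_\sigma$, so that $A_p^2 = \frac{1}{(p!)^2}\sum_{\sigma,\tau} \epsilon_\sigma \epsilon_\tau S_\sigma S_\tau = \frac{1}{(p!)^2}\sum_{\rho}\bigl(\sum_{\sigma}\epsilon_\rho\bigr) S_\rho = \frac{1}{p!}\sum_\rho \epsilon_\rho S_\rho = A_p$, using that for fixed $\rho$ the map $\sigma \mapsto \tau = \rho\circ\sigma^{-1}$ is a bijection of $\mathfrak{S}_p$. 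For (ii), since $S_\sigma^* = S_{\sigma^{-1}}$ by Proposition \ref{a.3} and $\epsilon_\sigma = \epsilon_{\sigma^{-1}}$, we get $A_p^* = \frac{1}{p!}\sum_\sigma \epsilon_\sigma S_{\sigma^{-1}} = A_p$ after reindexing. Thus $A_p$ is an orthogonal projection and its range is closed.

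For (iii), I would argue both inclusions. If $u \in \wedge^p E$, then $\epsilon_\sigma S_\sigma u = u$ for every $\sigma$, so $A_p u = \frac{1}{p!}\sum_\sigma \epsilon_\sigma S_\sigma u = \frac{1}{p!}\sum_\sigma u = u$, hence $u \in \ran A_p$. Conversely, for any $v \in \otimes_H^p E$ and any fixed $\tau \in \mathfrak{S}_p$, a reindexing argument gives $\epsilon_\tau S_\tau(A_p v) = \frac{1}{p!}\sum_\sigma \epsilon_\tau \epsilon_\sigma S_\tau S_\sigma v = \frac{1}{p!}\sum_\rho \epsilon_\rho S_\rho v = A_p v$, so $A_p v$ is antisymmetric; this shows $\ran A_p \subseteq \wedge^p E$. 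Combining, $\wedge^p E = \ran A_p$ is a closed linear subspace.

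The main obstacle is really bookkeeping rather than conceptual: one must be consistent about whether $S_\sigma S_\tau$ equals $S_{\sigma\tau}$ or $S_{\tau\sigma}$ (this depends on the convention in Definition \ref{sigma} for how $S_\sigma$ permutes slots), and one must make sure that extending $A_p$ from the algebraic tensor product to the completion is legitimate — but this is immediate since $A_p$ is a finite linear combination of the already-extended isometries $S_\sigma$, so no new limiting argument is needed. A secondary point worth stating explicitly is that the identity $\epsilon_\sigma S_\sigma u = u$ for all $\sigma$ in the definition of antisymmetric tensor passes to limits, so $\wedge^p E$ is closed for that independent reason too; but presenting it as the range of the projection $A_p$ is cleaner and will be reused later.
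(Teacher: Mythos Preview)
Your proof is correct, but it takes a different route from the paper. The paper argues directly that $\wedge^p E$ is the intersection over $\sigma\in\mathfrak{S}_p$ of the kernels of the continuous operators $f_\sigma = S_\sigma - \epsilon_\sigma I$, and an intersection of closed subspaces is closed. This is exactly the ``secondary point'' you mention at the end. Your main argument instead constructs the antisymmetrizer $A_p = \tfrac{1}{p!}\sum_\sigma \epsilon_\sigma S_\sigma$, checks it is a self-adjoint idempotent, and identifies $\wedge^p E$ with its range. In the paper's organization this formula for the projection is established separately, as Theorem~\ref{a.6}, \emph{after} Theorem~\ref{a.10} has guaranteed that an orthogonal projection onto $\wedge^p E$ exists. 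So in effect you have merged Theorems~\ref{a.10} and~\ref{a.6} into a single argument. The paper's route is slightly more economical for the bare closedness statement; your route has the advantage of delivering the projection formula at the same time, which, as you note, is reused throughout the paper.
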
\vspace{2ex}
\begin{proof}
 For $\sigma \in \mathfrak{S}_p$ define the operator 
 $$
f_\sigma \stackrel{\emph{def}}{=} S_\sigma - \epsilon_\sigma  I\;\colon \otimes_H^p E \to \otimes_H^p E,
$$
 where $I$ denotes the identity operator on $ \otimes_H^p E $.  Since $S_\sigma$ is a continuous linear operator on $\otimes_H^pE$, $f_\sigma$ is a continuous linear operator. 
 The kernel of the operator $f_\sigma$ is 
 $$
 \begin{array}{cllllllllll}
 \ker f_\sigma &=\{ u \in \otimes_H^p E \colon (S_\sigma - \epsilon_\sigma I)(u)= 0\} \\
 &=\{ u \in \otimes_H^pE \colon S_\sigma (u) = \epsilon_\sigma u  \}\\
 &= \{ u \in \otimes_H^pE \colon \epsilon_\sigma S_\sigma(u)= u\}.
  \end{array}
$$
 
\noindent Since $f_\sigma$ is a continuous linear operator on $\otimes_H^p E,$ $\ker f_\sigma$ is a closed linear subspace of $\otimes_H^p E.$ Thus $\we^p E$ is a closed linear subspace of $\otimes_H^p E,$
 since 
\[
\we^p E = \{u \in \otimes_H^p E \; \colon\epsilon_\sigma S_\sigma(u) = u \mbox{ for all } \sigma \in \mathfrak{S}_p\} = \bigcap\limits_{\sigma \in \mathfrak{S}_p} \ker f_\sigma. 
 \qedhere\]\end{proof}

Theorem \ref{a.10} implies that the orthogonal projection from $\otimes_H^p E$
onto $\we^pE$ is well-defined.
 \begin{definition}\label{a.5}
Let $E$ be a Hilbert space.  For $x_1, \dots, x_p \in E,$ define $x_1 \wedge x_2 \wedge \dots \wedge x_p$ to be the orthogonal
  projection  of the \index{elementary tensor}elementary tensor $x_1 \otimes x_2 \otimes \dots \otimes x_p$ onto $\wedge^p E$, that is
  $$x_1 \wedge x_2 \wedge \dots \wedge x_p= P_{\we^p E}( x_1 \otimes \cdots \otimes x_p).$$
   \end{definition}

 \begin{theorem}\label{a.6}
  For all $u \in \otimes_H^p E,$  
  $$P_{\wedge^p E}(u) = \displaystyle\frac{1}{p!} \sum\limits_{\sigma \in \mathfrak{S}_p}\epsilon_{\sigma}S_\sigma (u).$$
 \end{theorem}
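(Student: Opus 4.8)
The plan is to show that the operator
\[
A \df \frac{1}{p!}\sum_{\sigma\in\mathfrak{S}_p}\epsilon_\sigma S_\sigma
\]
is exactly the orthogonal projection of $\otimes_H^p E$ onto $\we^p E$, by checking the three defining properties of that projection: (i) $\ran A \subseteq \we^p E$; (ii) $A$ restricts to the identity on $\we^p E$; (iii) $A$ is self-adjoint (equivalently, $\ker A \perp \ran A$). Since $A$ is a finite linear combination of the bounded operators $S_\sigma$, it is bounded, so there are no convergence issues and it suffices to verify everything on $\otimes_H^p E$.

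First I would record the key algebraic fact: for each fixed $\tau\in\mathfrak{S}_p$, the map $\sigma\mapsto\tau\circ\sigma$ is a bijection of $\mathfrak{S}_p$, and $S_{\tau\circ\sigma}=S_\tau S_\sigma$ while $\epsilon_{\tau\circ\sigma}=\epsilon_\tau\epsilon_\sigma$. Hence
\[
S_\tau A=\frac{1}{p!}\sum_{\sigma}\epsilon_\sigma S_\tau S_\sigma=\frac{1}{p!}\sum_{\sigma}\epsilon_\sigma S_{\tau\sigma}=\frac{\epsilon_\tau}{p!}\sum_{\rho}\epsilon_\rho S_\rho=\epsilon_\tau A,
\]
where I substituted $\rho=\tau\sigma$. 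Thus $\epsilon_\tau S_\tau(Au)=\epsilon_\tau^2 Au=Au$ for every $\tau$, so $Au\in\we^p E$ by Definition \ref{a.4}; this gives (i). For (ii), if $u\in\we^p E$ then $\epsilon_\sigma S_\sigma u=u$ for all $\sigma$, hence $S_\sigma u=\epsilon_\sigma u$ (using $\epsilon_\sigma^2=1$), and therefore $Au=\frac{1}{p!}\sum_\sigma\epsilon_\sigma S_\sigma u=\frac{1}{p!}\sum_\sigma\epsilon_\sigma^2 u=u$. Note (i) and (ii) together already show $A^2=A$ and $\ran A=\we^p E$, so $A$ is an idempotent onto the right subspace.

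It remains to prove (iii), that $A$ is self-adjoint; then $A$ is the \emph{orthogonal} projection onto $\we^p E$, which by Theorem \ref{a.10} coincides with $P_{\we^p E}$. For this I would use Proposition \ref{a.3}: each $S_\sigma$ is unitary with $S_\sigma^*=S_{\sigma^{-1}}$. Hence
\[
A^*=\frac{1}{p!}\sum_{\sigma}\overline{\epsilon_\sigma}\,S_\sigma^*=\frac{1}{p!}\sum_{\sigma}\epsilon_\sigma S_{\sigma^{-1}}=\frac{1}{p!}\sum_{\tau}\epsilon_{\tau^{-1}}S_\tau=\frac{1}{p!}\sum_{\tau}\epsilon_\tau S_\tau=A,
\]
using that $\epsilon_\sigma$ is real, that $\sigma\mapsto\sigma^{-1}$ is a bijection of $\mathfrak{S}_p$, and the identity $\epsilon_{\tau^{-1}}=\epsilon_\tau$ noted after Definition \ref{a.4}. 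This completes the verification. I do not anticipate a genuine obstacle here; the only point requiring slight care is the bookkeeping of the index substitutions $\sigma\mapsto\tau\sigma$ and $\sigma\mapsto\sigma^{-1}$ and the invocation of Proposition \ref{a.3} to legitimately pass to the completion $\otimes_H^p E$ (rather than arguing only on the algebraic tensor product). One could alternatively skip the self-adjointness computation and instead show $\ker A\perp\we^p E$ directly, but the adjoint computation above is the cleanest route.
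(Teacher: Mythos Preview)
Your proof is correct and uses the same ingredients as the paper: the reindexing $\sigma\mapsto\tau\sigma$ to show $Au\in\we^p E$, and the identity $S_\sigma^*=S_{\sigma^{-1}}$ from Proposition~\ref{a.3} for the orthogonality part. The only difference is packaging: the paper writes the explicit decomposition $p!\,u=\sum_\sigma\epsilon_\sigma S_\sigma u+\sum_\sigma(u-\epsilon_\sigma S_\sigma u)$ and checks directly that the second sum is orthogonal to every $v\in\we^p E$, whereas you verify the operator identity $A^*=A$ and conclude that the idempotent $A$ is the orthogonal projection; these are the same computation viewed from two sides, and your formulation is arguably a little cleaner.
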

 \begin{proof}
  Let $u \in \otimes_H^p E$. Then, for any $\sigma \in \mathfrak{S}_p,$
   $u = \epsilon_{\sigma}S_\sigma (u) + (u - \epsilon_{\sigma}S_\sigma (u)),$ and so
   $$p!u = \sum\limits_{\sigma \in \mathfrak{S}_p} \epsilon_{\sigma}S_\sigma (u) + \sum\limits_{\sigma \in \mathfrak{S}_p}( u - \epsilon_{\sigma}S_\sigma (u)).$$ 
It suffices to show that 
 $\sum\limits_{\sigma \in \mathfrak{S}_p} \epsilon_{\sigma}S_\sigma (u)\in \we^pE $ and
$$ \sum\limits_{\sigma \in \mathfrak{S}_p}(u - \epsilon_{\sigma}S_\sigma (u))$$ is 
  orthogonal to the set of antisymmetric tensors, in other words, that if $v \in \wedge^p E$ then 
  $$\langle v , \sum\limits_{\sigma \in \mathfrak{S}_p} (u - \epsilon_{\sigma}S_\sigma (u))\rangle_{\otimes^p_H E} = 0.$$
Let $w= \sum\limits_{\sigma \in \mathfrak{S}_p} \epsilon_\sigma S_\sigma(u)\in \otimes_H^pE$. For every $\tau \in \mathfrak{S}_p,$ we have 
  $$\begin{array}{clll}
  \epsilon_\tau S_\tau (w) &=  \displaystyle\epsilon_\tau S_\tau \left(  \sum\limits_{\sigma \in \mathfrak{S}_p} \epsilon_\sigma S_\sigma(u) \right) \vspace{2ex}
= \displaystyle\sum\limits_{\tau \circ \sigma \in \mathfrak{S}_p} \epsilon_{\tau \circ \sigma} S_{\tau\circ \sigma} (u) \vspace{2ex}\\
  &= \displaystyle\sum\limits_{\sigma' \in \mathfrak{S}_p} \epsilon_{\sigma'} S_{\sigma'} (u) \vspace{2ex}= w,\end{array}$$where $\tau \circ \sigma = \sigma'.$ 
Hence $\sum\limits_{\sigma \in \mathfrak{S}_p} \epsilon_{\sigma}S_\sigma (u)\in \we^pE .$
  
  \noindent For every $v \in \we^p E,$ we have $v = \epsilon_\sigma S_\sigma v $ for all $\sigma \in \mathfrak{S}_p,$ and
\[  \begin{array}{cllllllllllll} 
  \langle v , \sum\limits_{\sigma \in \mathfrak{S}_p}(u -\epsilon_{\sigma}S_\sigma (u))\rangle_{\otimes_H^p E}&=  \sum\limits_{\sigma \in \mathfrak{S}_p} \langle v , u \rangle_{\otimes_H^p E} - 
  \sum\limits_{\sigma \in \mathfrak{S}_p} \epsilon_{\sigma}\langle v , S_\sigma (u) \rangle_{\otimes_H^p E} \vspace{2ex} \\ 
  &=  \sum\limits_{\sigma \in \mathfrak{S}_p} \langle v , u \rangle_{\otimes_H^p E} - \sum\limits_{\sigma \in \mathfrak{S}_p} \epsilon_{{\sigma}} \langle {S_\sigma}^* v , u \rangle_{\otimes_H^p E} \vspace{2ex} \\ 
  &= \sum\limits_{\sigma \in \mathfrak{S}_p}  \langle v - \epsilon_{{\sigma}^{-1}}{S_{\sigma^{-1}}} v , u \rangle_{\otimes_H^p E} \vspace{2ex} =0.
  \end{array} \]\end{proof}

 \begin{remark}
 	If $p>1,$ then $\mathfrak{S}_p$ contains a transposition, for instance $\sigma=(1\;2),$ and $\epsilon_\sigma=-1.$ If $p=1,$ then $\we^1E = E.$ 
 \end{remark}
 \begin{proposition}\label{a.8}
Let $E$ be a Hilbert space and let $p\geq 2$. The set of antisymmetric tensors and the set of symmetric tensors are orthogonal in $\otimes_H^p E.$
 \end{proposition}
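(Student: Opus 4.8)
The plan is to avoid averaging over all of $\mathfrak{S}_p$ and instead extract orthogonality from a single odd permutation. Since $p \geq 2$, the symmetric group $\mathfrak{S}_p$ contains the transposition $\tau = (1\ 2)$, which satisfies $\tau^{-1} = \tau$ and $\epsilon_\tau = -1$. First I would record how $S_\tau$ acts on each type of tensor: if $u \in \otimes_H^p E$ is symmetric, then $S_\tau(u) = u$ by Definition \ref{a.4}; if $v \in \otimes_H^p E$ is antisymmetric, then $v = \epsilon_\tau S_\tau(v) = -S_\tau(v)$, so $S_\tau(v) = -v$.

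Next I would invoke Proposition \ref{a.3}, by which $S_\tau$ is unitary on $\otimes_H^p E$, together with the identity $S_\sigma^* = S_{\sigma^{-1}}$ established in its proof; since $\tau$ is an involution this gives $S_\tau^* = S_{\tau^{-1}} = S_\tau$. Then, for symmetric $u$ and antisymmetric $v$,
\[
\langle u, v \rangle_{\otimes_H^p E} = \langle S_\tau u, v\rangle_{\otimes_H^p E} = \langle u, S_\tau^* v\rangle_{\otimes_H^p E} = \langle u, S_\tau v\rangle_{\otimes_H^p E} = -\langle u, v\rangle_{\otimes_H^p E},
\]
so $2\langle u, v\rangle_{\otimes_H^p E} = 0$ and hence $\langle u, v\rangle_{\otimes_H^p E} = 0$. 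As $u$ and $v$ were arbitrary, the set of symmetric tensors is orthogonal to the set of antisymmetric tensors in $\otimes_H^p E$.

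There is essentially no serious obstacle here; the only point worth isolating is the temptation to symmetrize or antisymmetrize over the whole of $\mathfrak{S}_p$, which is unnecessary — one odd permutation already forces the inner product to equal its own negative. The only routine items to check are that $\mathfrak{S}_p$ indeed contains a transposition when $p \geq 2$ (already noted in the Remark preceding the statement) and that $S_\tau^* = S_\tau$, which is immediate from $\tau^2 = \id$.
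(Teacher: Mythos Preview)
Your proof is correct and follows essentially the same approach as the paper: both pick a single transposition $\sigma$ with $\epsilon_\sigma=-1$, use the unitarity of $S_\sigma$ from Proposition~\ref{a.3}, and deduce $\langle u,v\rangle = -\langle u,v\rangle$. The only cosmetic difference is that the paper invokes unitarity in the form $\langle u,v\rangle = \langle S_\sigma u, S_\sigma v\rangle$, whereas you move $S_\tau$ across the inner product via $S_\tau^*=S_{\tau^{-1}}=S_\tau$; these are equivalent uses of the same fact.
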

 \begin{proof}
 Suppose that $u$ is a symmetric tensor, that is $S_\sigma u= u$ for all $\sigma \in \mathfrak{S}_p$, and that $v$ is an antisymmetric tensor, that is, $ S_\sigma v = \epsilon_\sigma  v$  for all $\sigma \in \mathfrak{S}_p$.
Since $ p\geq 2$ there exists a transposition $\sigma\in \mathfrak{S}_p$, so that $\epsilon_\sigma=-1$, and therefore $S_\sigma v=-v$. By Proposition \ref{a.3}, $S_\sigma$ is a unitary operator on $\otimes_H^p E$. Thus 
 $$\langle u , v \rangle_{\otimes_H^p E} = \langle S_\sigma u , S_\sigma v \rangle_{\otimes_H^p E} 
 = \langle u, - v \rangle_{\otimes_H^p E}.$$
 Thus 
 $\langle u , v \rangle_{\otimes_H^p E} = - \langle u , v \rangle_{\otimes_H^p E},$ and so
 $\langle u , v\rangle_{\otimes_H^p E} =0.$   
\end{proof}

\begin{proposition}\label{we}
Let $E$ be a Hilbert space.
 The inner product in $\wedge^p E$ is given by
 
 $$\langle x_1 \wedge \dots \wedge x_p, y_1 \wedge \dots \wedge y_p \rangle_{\wedge^p E} = \det \begin{pmatrix}
                                                                                   
                                                                                   \langle x_1, y_1 \rangle_E &\dots & \langle x_1, y_p \rangle_E\\
                                                                                   					\vdots &\ddots & \vdots \\
                                                                                   \langle x_p, y_1  \rangle_E & \dots &\langle x_p, y_p \rangle_E
                                                                                                                                                                                                     \end{pmatrix}$$
for all $x_1, \dots, x_p, y_1, \dots, y_p \in E$.
\end{proposition}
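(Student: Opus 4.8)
The plan is to reduce the inner product of two elementary wedge products to the inner product of a projected tensor with an elementary tensor, and then to recognise the resulting sum over the symmetric group as the Leibniz expansion of the Gram determinant.

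First I would use Definition \ref{a.5} to write $x_1 \wedge \dots \wedge x_p = P_{\wedge^p E}(x_1 \otimes \dots \otimes x_p)$, and similarly for the $y_i$. Since $\wedge^p E$ is a closed subspace of $\otimes_H^p E$ by Theorem \ref{a.10}, the orthogonal projection $P_{\wedge^p E}$ is a well-defined self-adjoint idempotent, so
\[
\langle x_1 \wedge \dots \wedge x_p, y_1 \wedge \dots \wedge y_p\rangle_{\wedge^p E} = \langle P_{\wedge^p E}(x_1 \otimes \dots \otimes x_p), y_1 \otimes \dots \otimes y_p\rangle_{\otimes_H^p E}.
\]

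Next I would substitute the explicit formula for the projection from Theorem \ref{a.6}, namely $P_{\wedge^p E}(u) = \tfrac{1}{p!}\sum_{\sigma \in \mathfrak{S}_p}\epsilon_\sigma S_\sigma(u)$, apply the definition of $S_\sigma$ on elementary tensors, and evaluate each term using the inner product formula \eqref{tensorinner}. This yields
\[
\langle x_1 \wedge \dots \wedge x_p, y_1 \wedge \dots \wedge y_p\rangle_{\wedge^p E} = \frac{1}{p!}\sum_{\sigma \in \mathfrak{S}_p}\epsilon_\sigma\, p!\prod_{j=1}^p \langle x_{\sigma(j)}, y_j\rangle_E = \sum_{\sigma \in \mathfrak{S}_p}\epsilon_\sigma\prod_{j=1}^p \langle x_{\sigma(j)}, y_j\rangle_E ,
\]
where the factor $p!$ built into \eqref{tensorinner} cancels exactly the $1/p!$ coming from the projection — this is precisely the cancellation anticipated in the remark following \eqref{tensorinner}.

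Finally I would observe that the last expression is the Leibniz formula for $\det M$, where $M = (\langle x_i, y_j\rangle_E)_{i,j=1}^p$, since $\det M = \sum_{\sigma \in \mathfrak{S}_p}\epsilon_\sigma \prod_{j=1}^p M_{\sigma(j),j}$ and $M_{\sigma(j),j} = \langle x_{\sigma(j)}, y_j\rangle_E$. There is no substantial obstacle here; the only point needing a little care is the bookkeeping of which argument the permutation acts on, but since $\det M = \det M^{T}$ both conventions give the stated matrix, so the identification of the permutation sum with the determinant is immediate.
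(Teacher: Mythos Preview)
Your proof is correct and follows the same overall strategy as the paper --- expand the projection via Theorem \ref{a.6}, evaluate against an elementary tensor using \eqref{tensorinner}, and identify the resulting signed sum as the Leibniz expansion of the Gram determinant.

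The one genuine difference is in how you collapse to a single sum over $\mathfrak{S}_p$. You use at the outset that $P_{\wedge^p E}$ is a self-adjoint idempotent, so $\langle Pu, Pv\rangle = \langle Pu, v\rangle$, and therefore you need only expand one of the two wedge products. The paper instead expands both projections, obtaining a double sum $\tfrac{1}{p!^2}\sum_{\sigma,\tau}\epsilon_\sigma\epsilon_\tau\langle S_\sigma u, S_\tau v\rangle$, and then reduces it to a single sum by invoking $S_\sigma^* = S_{\sigma^{-1}}$ and reindexing via $\sigma' = \sigma^{-1}\tau$. Your route is shorter and avoids this manipulation; the paper's route has the minor virtue of not appealing to the abstract projection identity, working instead purely with the explicit permutation operators. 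Both arrive at exactly the same Leibniz sum, and your remark about $\det M = \det M^T$ handling the row-versus-column convention is apt.
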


\begin{proof}
 By Theorem \ref{a.6}, we have
\[ \begin{array}{cllllllllllllllllll}
 &\langle x_1 \wedge \dots \wedge x_p, y_1 \wedge \dots \wedge y_p \rangle_{\wedge^p E} \\ [5ex]
 &=\left\langle \displaystyle\frac{1}{p!}\sum\limits_{\sigma \in \mathfrak{S}_p}\epsilon_{\sigma}S_\sigma (x_1 \otimes x_2 \otimes \dots \otimes x_p) , \displaystyle\frac{1}{p!}\sum\limits_{\tau \in \mathfrak{S}_p}\epsilon_{\tau}S_\tau 
 (y_1 \otimes y_2 \otimes \dots \otimes y_p) \right\rangle_{\otimes_H^p E} \vspace{2ex}\\ 
 &=\displaystyle\frac{1}{p!^2} \sum\limits_{\sigma,\tau \in \mathfrak{S}_p} \langle \epsilon_\sigma S_\sigma (x_1 \otimes x_2 \otimes \dots \otimes x_p), \epsilon_\tau S_\tau 
 (y_1 \otimes y_2 \otimes \dots \otimes y_p) \rangle_ {\otimes_H^p E} \vspace{2ex}\\ 
 &=\;\displaystyle \frac{1}{p!^2}  \sum\limits_{\sigma,\tau \in \mathfrak{S}_p}\epsilon_{\sigma}\epsilon_{\tau} \langle  x_1 \otimes x_2 \otimes \dots \otimes x_p,S_\sigma^* S_\tau  (y_1 \otimes y_2 \otimes \dots \otimes y_p)  \rangle_{\otimes_H^p E} \vspace{2ex}\\ 
  &=\;\displaystyle \frac{1}{p!^2} \sum\limits_{\sigma,\tau \in \mathfrak{S}_p}\epsilon_{\sigma^{-1}}\epsilon_{\tau} \langle x_1 \otimes x_2 \otimes \dots \otimes x_p,S_{\sigma^{-1}} S_\tau (y_1 \otimes y_2 \otimes \dots \otimes y_p ) \rangle_ {\otimes_H^p E}\vspace{2ex} \\ 
  &=\;\displaystyle \frac{1}{p!} \sum\limits_{\sigma' \in \mathfrak{S}_p}\epsilon_{\sigma'}  \langle  x_1 \otimes x_2 \otimes \dots \otimes x_p,S_{\sigma'} (y_1 \otimes y_2 \otimes \dots \otimes y_p )  \rangle_ {\otimes_H^p E}\vspace{2ex} \\ 
  &= \;\displaystyle  \sum\limits_{\sigma'\in \mathfrak{S}_p}\epsilon_{\sigma'} \prod\limits_{i=1}^p \langle  x_{i}, y_{\sigma'(i)}\rangle_E \vspace{2ex} \\
   &= \det \begin{pmatrix}
           \langle x_1 , y_1 \rangle_E & \cdots & \langle x_1 , y_p\rangle_E\\
           \vdots &\ddots & \vdots\\
           \langle x_p , y_1 \rangle_E & \cdots & \langle x_p , y_p \rangle_E
                    \end{pmatrix}
  \end{array}\]
by Leibniz' formula.\end{proof}
Since we have already shown that $\we^p E$ is a closed linear subspace of the Hilbert space $\otimes_H^pE,$ the space $(\we^p E, \langle\cdot,\cdot\rangle_{\we^pE})$ with inner product given by Proposition \ref{we} is itself a Hilbert space. 

\begin{lemma}\label{weon}
	Suppose $\{u_1, \cdots, u_j\}$ is an orthonormal set in $E.$ Then, 
for every $x \in E,$
	$$\| u_1  \we \cdots \we u_j \we x\|_{\we^{j+1}E} =\| x - \displaystyle\sum\limits_{i=1}^j \langle x, u_i  \rangle u_i\|_{E}.   $$
\end{lemma}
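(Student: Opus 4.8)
The plan is to reduce the statement to Proposition \ref{we}, which computes norms in $\we^{j+1}E$ as Gram determinants, and then to evaluate the resulting determinant using the orthonormality of $\{u_1,\dots,u_j\}$.

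First I would apply Proposition \ref{we} with $p=j+1$ and with both tuples equal to $(u_1,\dots,u_j,x)$, obtaining
$$
\|u_1 \we \cdots \we u_j \we x\|_{\we^{j+1}E}^2 = \det G,
$$
where $G$ is the $(j+1)\times(j+1)$ Gram matrix of $(u_1,\dots,u_j,x)$. By hypothesis $\langle u_i, u_k\rangle_E = \delta_{ik}$ for $1\le i,k\le j$, so $G$ has the block form
$$
G = \begin{pmatrix} I_j & c \\ c^* & \|x\|_E^2 \end{pmatrix},
$$
where $c\in\C^j$ is the column vector with entries $c_i=\langle u_i,x\rangle_E$ and $c^*$ is the row vector with entries $\langle x,u_i\rangle_E$.

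Next I would evaluate $\det G$ by an elementary row reduction: subtracting $\langle x,u_i\rangle_E$ times the $i$-th row from the last row, for $i=1,\dots,j$, leaves the first $j$ rows unchanged and replaces the last row by $\bigl(0,\dots,0,\ \|x\|_E^2-\sum_{i=1}^j|\langle x,u_i\rangle_E|^2\bigr)$, whence $\det G = \|x\|_E^2-\sum_{i=1}^j|\langle x,u_i\rangle_E|^2$ (equivalently, this is the Schur complement of the $I_j$ block). Finally I would identify the right-hand side of the lemma: writing $y=\sum_{i=1}^j\langle x,u_i\rangle_E u_i$, the difference $x-y$ is orthogonal to each $u_i$, so the Pythagorean identity gives $\|x\|_E^2=\|x-y\|_E^2+\|y\|_E^2$, and orthonormality of the $u_i$ gives $\|y\|_E^2=\sum_{i=1}^j|\langle x,u_i\rangle_E|^2$. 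Combining, $\|x-y\|_E^2 = \|x\|_E^2-\sum_{i=1}^j|\langle x,u_i\rangle_E|^2 = \det G$, and taking square roots yields the claim.

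There is no substantive obstacle: the argument is a Gram-determinant computation. The only point deserving care is the bookkeeping with the conjugate-linear slot of the inner product, so that the entries of $G$ are written in the correct order and $|\langle u_i,x\rangle_E|^2=|\langle x,u_i\rangle_E|^2$ is applied consistently; this matters for matching $\det G$ with the squared norm of $x-\sum_{i=1}^j\langle x,u_i\rangle u_i$ exactly as stated.
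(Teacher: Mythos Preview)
Your proposal is correct and follows essentially the same approach as the paper: apply Proposition \ref{we} to obtain the Gram determinant, use orthonormality to get the $I_j$ block, reduce by elementary operations to $\|x\|_E^2-\sum_{i=1}^j|\langle x,u_i\rangle_E|^2$, and invoke Pythagoras. The only cosmetic difference is that the paper clears the last column by column operations while you clear the last row by row operations (equivalently, via the Schur complement).
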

\begin{proof} For $x \in E$ we may write
	$$ x = x -  \displaystyle\sum\limits_{i=1}^j \langle x, u_i \rangle u_i + \displaystyle\sum\limits_{i=1}^j \langle x, u_i  \rangle u_i.$$  
	\noindent By Proposition \ref{we}, 
		$$ 	\begin{array}{lllllllll}
	&\| u_1 \we \cdots \we u_j \we x\|_{\we^{j+1}E}^2  \vspace{3ex}\\ &= \langle u_1 \we \cdots \we u_j \we x , u_1 \we \cdots \we u_j \we x \rangle_{\we^{j+1}E} \vspace{3ex}\\
	&= \det\begin{pmatrix}
	\langle u_1, u_1\rangle_{E}  & \langle u_1, u_2\rangle_{E}&\cdots&\cdots& \langle u_1 ,x\rangle_{E} \\
	\langle u_2, u_1 \rangle_{E} & \langle u_2 , u_2 \rangle_{E} &\cdots &\cdots& \langle u_2, x \rangle_{E} \\
	\vdots &\cdots & \ddots &\cdots&\cdots \\
\langle u_j , u_1 \rangle_{E} & \langle u_j , u_2 \rangle_{E} &  \cdots &\langle u_j, u_j \rangle_{E}& \langle u_j , x \rangle_{E}\\
	\langle x , u_1 \rangle_{E} & \langle x , u_2 \rangle_{E} &\cdots &\cdots&\langle x,x\rangle_{E}
	\end{pmatrix}.	
	\end{array}$$  
By assumption,
	$$\langle u_i, u_k \rangle =\left\{ \begin{array}{ll}
	0,\quad \mbox{ if } i\neq k\\
	1,\quad \mbox{ if }  i=k \end{array},\right.$$ and hence	
	$$\begin{array}{lllllllll}
	&\| u_1  \we \cdots \we u_j \we x\|_{\we^{j+1}E}^2  = \det\begin{pmatrix}
	1  & 0 &\cdots& \langle u_1 , x\rangle_{E} \\
	0 & 1 &\cdots & \langle u_2, x \rangle_{E}\\
	\vdots &~ & \hspace{-9ex}\ddots &\vdots \\
	0&\cdots & 1 & \langle u_j,x\rangle_{ E}\\ 
	\langle x , u_1 \rangle_{E} &\langle x , u_2 \rangle_{E} &\cdots &\langle x,x \rangle_{E}
	\end{pmatrix}.	
	 \end{array}$$ 
\noindent If, for $k=1, \cdots, j$ we multiply the $k$-th column of the determinant by $ \langle u_k , x \rangle_{E}$ and subtract it from the $(j+1)$-th column, we find that
$$\begin{array}{clll}
\| u_1  \we \cdots \we u_j \we x\|_{\we^{j+1}E}^2 
&= \det\begin{pmatrix}
1  & 0 &~&\cdots& 0 \\
0 & 1 &~&\cdots & 0\\
\vdots &~ & \ddots &~&\vdots \\%
0&~&\cdots & 1 & 0\\ 
\langle x , u_1 \rangle_{E} &\langle x , u_2 \rangle_{E}& \cdots &\cdots &\langle x,x\rangle_{E} - \sum\limits_{i=1}^j |\langle x, u_i  \rangle_{E}|^2 
\end{pmatrix}\vspace{3ex}\\
	&=\| x\|_{E}^2-  \displaystyle\sum\limits_{i=1}^j |\langle x, u_i  \rangle_{E}|^2 \vspace{3ex}\\
	&= \| x - \displaystyle\sum\limits_{i=1}^j \langle x, u_i\rangle_{E} u_i \|_{E}^2,
\end{array} $$
the latter equality by Pythagoras' theorem. 	
\end{proof}

\begin{definition}
	Let $(E, \| \cdot\|_E) $ be a Hilbert space. The \emph{$p$-fold Cartesian product of $E$} is defined to be the set 
	$$ \underbrace{E\times \dots \times E}_{p-times} =\{ (x_1,\dots, x_p): x_i \in E \}.$$ Moreover, we define a norm on  $\underbrace{E\times \dots \times E}_{p-times}$ by 
	$$\|(x_1,\dots, x_p) \|=\{\sum_{i=1}^p \|x_i\|_E^2\}^\half. $$
\end{definition}

\begin{definition}
Let $E$ be a Hilbert space. We define the multilinear operator $$\Lambda \colon \underbrace{E\times \dots \times E}_{p-times} \to \we^p E$$ by 
$$\Lambda(x_1,\dots,x_p)= x_1 \we\dots \we x_p \quad \text{for all}\quad x_1,\dots,x_p \in E. $$

\end{definition}

\begin{proposition}\label{Hadam}{\rm [Hadamard's inequality, \cite{Sing}, p. 477]}
	For any matrix $$A=(a_{ij})\in \Cc^{n\times n},$$
	$$|\det(A)| \leq \prod\limits_{j=1}^{n}\left( \sum\limits_{i=1}^n |a_{ij} |^2  \right)^{1/2} \quad \text{and} \quad |\det(A)| \leq \prod\limits_{i=1}^{n}\left( \sum\limits_{j=1}^n |a_{ij} |^2  \right)^{1/2}. $$
\end{proposition}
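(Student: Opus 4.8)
The plan is to derive Hadamard's inequality from the determinant formula for the inner product on $\we^{n}E$ established in Proposition \ref{we}, which keeps the argument self-contained within the framework developed here. First I would note that it suffices to prove the first inequality: applying it to the transpose $A^{T}$, whose $(i,j)$ entry is $a_{ji}$ and which satisfies $\det A^{T}=\det A$, reindexes to give the second.

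For the first inequality, realise the columns of $A$ as vectors in $E=\Cc^{n}$: fix an orthonormal basis $e_{1},\dots,e_{n}$ and set $x_{j}=\sum_{i=1}^{n}a_{ij}e_{i}$, so that $\|x_{j}\|_{E}^{2}=\sum_{i=1}^{n}|a_{ij}|^{2}$ and the right-hand side of the first inequality equals $\prod_{j=1}^{n}\|x_{j}\|_{E}$. By Proposition \ref{we},
\[
\|x_{1}\we\cdots\we x_{n}\|_{\we^{n}E}^{2}=\det\big(\langle x_{k},x_{l}\rangle_{E}\big)_{k,l=1}^{n}=|\det A|^{2},
\]
the last equality by multiplicativity of the determinant, since the Gram matrix of $(x_{1},\dots,x_{n})$ equals $A^{*}A$ or $A^{T}\overline{A}$ according to the sesquilinearity convention, either of which has determinant $|\det A|^{2}$. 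Thus everything reduces to the geometric estimate
\[
\|x_{1}\we\cdots\we x_{n}\|_{\we^{n}E}\le\|x_{1}\|_{E}\cdots\|x_{n}\|_{E}\qquad\text{for all }x_{1},\dots,x_{n}\in E .
\]

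I would prove this by induction on $n$, the case $n=1$ being an equality. If $x_{1},\dots,x_{n}$ are linearly dependent the Gram matrix is singular, so by Proposition \ref{we} the left-hand side vanishes and there is nothing to prove; hence assume them linearly independent. Write $x_{1}=p+q$, where $p$ is the orthogonal projection of $x_{1}$ onto $M:=\spn\{x_{2},\dots,x_{n}\}$ and $q=x_{1}-p\perp M$. Since $\we$ is multilinear (being the composition of the multilinear map $\otimes$ with the linear projection $P_{\we^{n}E}$ of Definition \ref{a.5}) and any wedge with a repeated factor has norm zero by Proposition \ref{we}, expanding $p$ over $x_{2},\dots,x_{n}$ gives $p\we x_{2}\we\cdots\we x_{n}=0$, hence $x_{1}\we\cdots\we x_{n}=q\we x_{2}\we\cdots\we x_{n}$. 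As $q\perp x_{2},\dots,x_{n}$, the Gram matrix of $(q,x_{2},\dots,x_{n})$ is block-diagonal with blocks $\|q\|_{E}^{2}$ and the Gram matrix of $(x_{2},\dots,x_{n})$, so Proposition \ref{we} yields
\[
\|x_{1}\we\cdots\we x_{n}\|_{\we^{n}E}^{2}=\|q\|_{E}^{2}\,\|x_{2}\we\cdots\we x_{n}\|_{\we^{n-1}E}^{2}.
\]
Since $\|q\|_{E}\le\|x_{1}\|_{E}$ (an orthogonal projection does not increase the norm), the inductive hypothesis applied to $x_{2},\dots,x_{n}$ completes the step; incidentally this recovers Lemma \ref{weon} as the special case in which $x_{2},\dots,x_{n}$ are already orthonormal.

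The argument is routine once Proposition \ref{we} is available, and I do not expect a genuine obstacle; the only places needing slight care are keeping the conjugations straight in the identification $\det(\langle x_{k},x_{l}\rangle_{E})=|\det A|^{2}$ and the vanishing $p\we x_{2}\we\cdots\we x_{n}=0$, which rests on the antisymmetry of the wedge product (a consequence of Theorem \ref{a.6}, or equivalently of the vanishing of a determinant with a repeated row via Proposition \ref{we}). As an alternative to the induction one may run Gram--Schmidt on $x_{1},\dots,x_{n}$ to obtain orthonormal $u_{1},\dots,u_{n}$ and scalars $r_{kk}$ with $|r_{kk}|=\dist\big(x_{k},\spn\{x_{1},\dots,x_{k-1}\}\big)\le\|x_{k}\|_{E}$, observe that $x_{1}\we\cdots\we x_{n}=\big(\textstyle\prod_{k}r_{kk}\big)u_{1}\we\cdots\we u_{n}$ by multilinearity and antisymmetry, and use $\|u_{1}\we\cdots\we u_{n}\|_{\we^{n}E}=1$ from Proposition \ref{we}; this is the classical QR-decomposition proof phrased in the language of exterior powers.
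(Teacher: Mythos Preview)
Your argument is correct. Note, however, that the paper does not give its own proof of this proposition: it is stated with a citation to \cite{Sing}, p.~477, and used as an external input in the proof of Proposition~\ref{weopiscontinuous1}. There is therefore no paper proof to compare against.

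That said, your choice to derive Hadamard from Proposition~\ref{we} is a genuine improvement in self-containment over the paper's treatment, since Proposition~\ref{we} is established earlier and independently of Hadamard, so there is no circularity. The inductive step via the orthogonal decomposition $x_{1}=p+q$ is clean, and the block-diagonal Gram computation is exactly the determinant expansion along the first row. The only cosmetic point is that when you write ``an orthogonal projection does not increase the norm'' for $\|q\|_{E}\le\|x_{1}\|_{E}$, the vector $q$ is the projection of $x_{1}$ onto $M^{\perp}$ rather than onto $M$; the inequality of course holds for both components by Pythagoras, so nothing is wrong, but you may want to phrase it accordingly. Your alternative Gram--Schmidt/QR argument is equally valid and perhaps closer in spirit to the textbook proof the paper cites.
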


\begin{proposition}\label{weopiscontinuous1}
Let $E$ be a Hilbert space. Then the multilinear mapping $$\Lambda \colon \underbrace{E\times \dots \times E}_{p-times} \to \we^p E$$ is bounded.
\end{proposition}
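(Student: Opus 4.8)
The plan is to reduce the statement to an elementary determinantal estimate. By definition a multilinear map $\Lambda$ on a finite product of normed spaces is bounded precisely when there is a constant $C$ with $\norm{\Lambda(x_1,\dots,x_p)}_{\we^p E}\le C\prod_{i=1}^p\norm{x_i}_E$, and by multilinearity this is equivalent to $\Lambda$ being bounded on the set of tuples with each $\norm{x_i}_E\le 1$. So it suffices to produce one uniform estimate of this shape, and the whole content will come from Propositions \ref{we} and \ref{Hadam}.

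First I would invoke Proposition \ref{we} with $y_i=x_i$ to rewrite the quantity to be estimated as a Gram determinant,
\[
\norm{x_1\we\dots\we x_p}_{\we^p E}^2=\det\bigl(\langle x_i,x_j\rangle_E\bigr)_{i,j=1}^p .
\]
Next I would apply Hadamard's inequality (Proposition \ref{Hadam}) to the Gram matrix $A=(\langle x_i,x_j\rangle_E)$ and bound each of its entries by Cauchy--Schwarz, $\abs{\langle x_i,x_j\rangle_E}\le\norm{x_i}_E\norm{x_j}_E$. Writing $M^2=\sum_{j=1}^p\norm{x_j}_E^2$, the row form of Hadamard's inequality gives
\[
\norm{x_1\we\dots\we x_p}_{\we^p E}^2=\abs{\det A}\le\prod_{i=1}^p\Bigl(\sum_{j=1}^p\abs{\langle x_i,x_j\rangle_E}^2\Bigr)^{1/2}\le M^p\prod_{i=1}^p\norm{x_i}_E .
\]
Finally I would extract a constant: on the polyball $\norm{x_i}_E\le 1$ one has $M\le\sqrt p$ and $\prod_i\norm{x_i}_E\le 1$, so $\norm{\Lambda(x_1,\dots,x_p)}_{\we^p E}\le p^{p/4}$ there; equivalently, normalising each nonzero $x_i$ and using multilinearity yields $\norm{\Lambda(x_1,\dots,x_p)}_{\we^p E}\le p^{p/4}\prod_{i=1}^p\norm{x_i}_E$ for every tuple (the estimate being trivial when some $x_i=0$). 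This shows $\Lambda$ is bounded.

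I do not expect a genuine obstacle here: once the squared norm is identified with a Gram determinant, the estimate is just Hadamard's inequality combined with Cauchy--Schwarz, and the only thing needing a little care is the (standard) equivalence between boundedness of a multilinear operator on the unit polyball and the homogeneous inequality $\norm{\Lambda(x_1,\dots,x_p)}\le C\prod_i\norm{x_i}$. If one wanted the sharp constant $C=1$, one could instead show directly that the Gram determinant never exceeds $\prod_i\norm{x_i}_E^2$: pass to the finite-dimensional subspace $\spn\{x_1,\dots,x_p\}$, represent the $x_i$ as the columns of a matrix $B$, note $A=B^*B$, and apply Hadamard's inequality to $B$ when $\{x_1,\dots,x_p\}$ is linearly independent, the dependent case being trivial since then $\det A=0$. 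Either route completes the proof.
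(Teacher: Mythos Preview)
Your proof is correct and follows essentially the same route as the paper: identify $\norm{x_1\we\dots\we x_p}^2$ with the Gram determinant via Proposition~\ref{we}, then apply Hadamard's inequality (Proposition~\ref{Hadam}) together with Cauchy--Schwarz to bound it by $\prod_j\norm{x_j}_E\bigl(\sum_i\norm{x_i}_E^2\bigr)^{1/2}$. The only cosmetic difference is the normalisation used to finish: the paper bounds $\Lambda$ on the unit ball of $E^p$ with the norm $\norm{(x_1,\dots,x_p)}=(\sum_i\norm{x_i}_E^2)^{1/2}$, which makes the right-hand side at most~$1$, whereas you bound it on the polyball $\norm{x_i}_E\le 1$ and obtain the constant $p^{p/4}$; these are equivalent formulations of multilinear boundedness.
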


\begin{proof}
 Let $x_i \in E$ for all $i=1,\dots,p.$ Then $ \Lambda(x_1,\dots,x_p ) = x_1 \we \dots \we x_p$ and 
$$\begin{array}{clll} \| \Lambda(x_1,\dots,x_p )\|_{\we^p E}^2 &= \| x_1\we \dots \we x_p \|_{\we^p E}^2\vspace{2ex}\\
&= \langle x_1\we \dots \we x_p, x_1\we \dots \we x_p \rangle_{\we^p E} \vspace{2ex} \\
&=\det \begin{pmatrix}
\langle x_1, x_1 \rangle_E & \langle x_1,x_2 \rangle_E&\dots  &\langle x_1,x_p \rangle_E \\ 
\langle x_2 , x_1 \rangle_E & \langle x_2,x_2 \rangle_E & \dots & \langle x_2 , x_p \rangle_E \\
\vdots & \vdots & \ddots & \dots \\
\langle x_p, x_1 \rangle_E & \dots &\dots& \langle x_p, x_p \rangle_E 
\end{pmatrix}\geq 0.  \end{array} \label{deteq}$$ 
Let 
$$X= \begin{pmatrix}
\langle x_1, x_1 \rangle_E & \langle x_1,x_2 \rangle_E&\dots  &\langle x_1,x_p \rangle_E \\ 
\langle x_2 , x_1 \rangle_E & \langle x_2,x_2 \rangle_E & \dots & \langle x_2 , x_p \rangle_E \\
\vdots & \vdots & \ddots & \dots \\
\langle x_p, x_1 \rangle_E & \dots &\dots& \langle x_p, x_p \rangle_E 
\end{pmatrix}. $$ 
By Hadamard's inequality,
$$|\det(X)| \leq \prod\limits_{j=1}^{p}\left( \sum\limits_{i=1}^p |\langle x_i, x_j \rangle_E |^2  \right)^{1/2}.
$$

\noindent Moreover, by the Cauchy-Schwarz inequality,
$$|\det(X)| \leq \prod\limits_{j=1}^p \|x_j\|_E \left( \sum\limits_{i=1}^p \|x_i\|_E^2 \right)^{1/2}.
$$
Therefore
\begin{equation}\label{Had-C-S-inq}
\| \Lambda(x_1,\dots,x_p )\|_{\we^p E}^2 \leq  \prod\limits_{j=1}^p \|x_j\|_E \left( \sum\limits_{i=1}^p \|x_i\|_E^2 \right)^{1/2}.
\end{equation}
Let $\|(x_1,\dots,x_p)\|_{E^p} \leq 1$.  Since $\|x_j\|_E\leq \|(x_1,\dots,x_p)\|_{E^p} \leq 1$ for each $j$, we have
\begin{align*}
\| \Lambda (x_1, \dots, x_p)  \|^2_{\we^p E}  &  \leq 1.
\end{align*}
Hence the $p$-linear operator $\Lambda$ is bounded.  
\end{proof}

\section{Pointwise wedge products}\label{point_w_p}

In this section we introduce the notion of pointwise wedge product of vector-valued functions on  the unit circle or in the unit disk and explore its features.

\begin{definition}\label{a.17}
Let $E$ be a Hilbert space and let $f,g\colon \Dd \to E$  $\mathrm{(} f,g\colon \Tt \to E \mathrm{)}$ be $E$-valued maps. We define the \emph{pointwise wedge product of $f$ and $g,$}
$$f\telwe g \colon \Dd \to \we^2E \quad \mathrm{(} f\telwe g \colon \Tt \to \we^2E\mathrm{)}$$ 
by $$(f\telwe g) (z) = f(z) \we g(z) \quad \text{for all}\; z \in \Dd \quad \mathrm{(} \text{for almost all}\;  z \in \Tt\mathrm{)}.$$

\end{definition}\vspace{3ex}
 \begin{definition}\label{pointwiseld}
   Let $E$ be a Hilbert space and let $\chi_1,\dots,\chi_n \colon \mathbb{D} \to E$ $\mathrm{(}\chi_1,\dots,\chi_n \colon \mathbb{T} \to E \mathrm{)}$ be $E$-valued maps. We call $\chi_1,\dots \chi_n$ \emph{pointwise linearly dependent} on $\Dd$ \emph{(}or  on $\Tt$\emph{)} if  for all $z\in\Dd$ 
\rm{(}for almost all $z\in\Tt$ respectively\rm{)} the vectors $\chi_1(z), \dots,\chi_n(z)$ are linearly dependent in $E$.

   \end{definition}

\begin{remark}
 If $x_1, \dots, x_n$ are pointwise linearly dependent on $\Tt$, then 
 $$(x_1\telwe \dots \telwe x_n)(z)=0$$ for almost all $z \in \Tt.$
\end{remark}

\subsection{Pointwise wedge products on function spaces}

For vector-valued $L^p$ spaces we use the terminology of \cite{NagyFoias}.
\begin{definition}\label{a.12}	
Let $E$ be a separable Hilbert space and let $1\leq p < \infty.$  Define
\begin{enumerate}
 \item[{\rm (i)}] $L^p (\Tt,E)$ to be the normed space of measurable 
(weakly or strongly, which amounts to the same thing, in view of the separability of $E$)
 $E$-valued maps  $f \colon \Tt \to E$ such that 
 $$\|f\|_p = \left(\displaystyle\frac{1}{2\pi}\int_{0}^{2\pi} \|f(e^{i\theta})\|_E^p d\theta\right)^{1/p} <\infty ;$$
      \item[{\rm (ii)}] 
     $H^p (\mathbb{D},E)$ to be the normed space of analytic $E$-valued maps $f\colon\mathbb{D} \to E $ such that
     $$ \|f\|_p = \sup\limits_{0<r<1} \left(\frac{1}{2\pi}\int_{0}^{2\pi} \|f(re^{i\theta})\|_E^p d\theta\right)^{1/p} < \infty;$$
\item[{\rm (iii)}] 
$L^{\infty}(\mathbb{T},E )$ is the space of essentially bounded  measurable $E$-valued functions on the unit circle with essential supremum norm
 $$\|f\|_{L^\infty}= \mathrm{ess} \sup\limits_{|z|=1}\|f(z)\|_{E}.$$ 
    \end{enumerate}
   \end{definition}   
\begin{proposition}\label{a.18}
Let $E$ be a separable Hilbert space and let 
$\displaystyle\frac{1}{p}+\frac{1}{q}=1,$ where $1 \leq p,q \leq \infty$. Suppose that
 $x\in L^p(\mathbb{T},E),$ $y \in L^q(\mathbb{T},E).$ Then
 $$
x \dot{\we} y \in L^1(\mathbb{T},\we^2 E).
$$
and 
\be\label{Hwedge}
 \|x \dot{\we} y\|_{ L^1(\mathbb{T},\we^2 E)} \leq  \|x\|_{L^p(\T,E)}  \|y\|_{L^q(\T,E)}.
\ee
\end{proposition}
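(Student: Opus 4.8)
The plan is to reduce everything to the pointwise estimate furnished by Proposition \ref{we} and then integrate. First I would observe that, for any two vectors $x, y \in E$, Proposition \ref{we} (applied with $p=2$) gives
\[
\|x \we y\|_{\we^2 E}^2 = \det \begin{pmatrix} \langle x,x\rangle_E & \langle x,y\rangle_E \\ \langle y,x\rangle_E & \langle y,y\rangle_E \end{pmatrix} = \|x\|_E^2 \|y\|_E^2 - |\langle x,y\rangle_E|^2 \leq \|x\|_E^2 \|y\|_E^2,
\]
so that $\|x \we y\|_{\we^2 E} \leq \|x\|_E \|y\|_E$ for all $x, y \in E$. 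Applying this at almost every point $z \in \Tt$ to $x(z), y(z)$, we get the pointwise bound $\|(x\telwe y)(z)\|_{\we^2 E} \leq \|x(z)\|_E \|y(z)\|_E$ for a.e.\ $z \in \Tt$.

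Next I would address measurability: since $E$ is separable, $\we^2 E$ is separable, and the map $\Lambda\colon E\times E \to \we^2 E$ is continuous by Proposition \ref{weopiscontinuous1}; composing the (strongly, equivalently weakly) measurable map $z \mapsto (x(z), y(z))$ with the continuous map $\Lambda$ yields that $z \mapsto (x\telwe y)(z)$ is a measurable $\we^2 E$-valued function. Then the scalar function $z \mapsto \|(x\telwe y)(z)\|_{\we^2 E}$ is measurable and dominated a.e.\ by $z\mapsto \|x(z)\|_E\|y(z)\|_E$, which lies in $L^1(\Tt)$ by Hölder's inequality for the conjugate exponents $p, q$ (valid in all the cases $1\le p,q\le\infty$). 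Hence $x\telwe y \in L^1(\Tt, \we^2 E)$. Finally, integrating the pointwise bound and applying Hölder once more,
\[
\|x \telwe y\|_{L^1(\Tt,\we^2 E)} = \frac{1}{2\pi}\int_0^{2\pi} \|(x\telwe y)(\eiu)\|_{\we^2 E}\, d\theta \leq \frac{1}{2\pi}\int_0^{2\pi} \|x(\eiu)\|_E \|y(\eiu)\|_E\, d\theta \leq \|x\|_{L^p(\Tt,E)}\|y\|_{L^q(\Tt,E)},
\]
which is \eqref{Hwedge}.

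The argument is essentially routine; the only point requiring a little care is the measurability of the $\we^2 E$-valued function $x\telwe y$, which is where I would spend the most attention — one wants to invoke that a continuous image of a measurable vector-valued function is measurable (using separability of $E$ and of $\we^2 E$, and the continuity/boundedness of $\Lambda$ from Proposition \ref{weopiscontinuous1}), rather than verifying measurability of elementary-tensor components by hand. The endpoint cases $p=1, q=\infty$ (and vice versa) need only the trivial remark that $\|y(z)\|_E \leq \|y\|_{L^\infty}$ a.e., so Hölder's inequality still applies verbatim.
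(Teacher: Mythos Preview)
Your proof is correct and follows essentially the same route as the paper's: derive the pointwise bound $\|x(z)\we y(z)\|_{\we^2 E}\le \|x(z)\|_E\|y(z)\|_E$ from Proposition~\ref{we}, integrate, and apply H\"older's inequality. The only difference is that you explicitly address the measurability of $x\telwe y$ via continuity of $\Lambda$ and separability, whereas the paper passes over this point in silence; your extra care here is entirely appropriate.
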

\begin{proof}
 By Proposition \ref{we}, for all $z \in \Tt,$
 $$\begin{array}{cllllllll}
    \|(x \dot{\wedge} y)(z)\|_{\we^2 E}^2 &= \langle x(z) \wedge y(z), x(z) \wedge y(z) \rangle_{\we^2 E}\\
 &= \langle x(z) ,x(z) \rangle_{E} \cdot \langle y(z),y(z)\rangle_{E} - |\langle x(z) ,y(z) \rangle_{E}|^2 \\
 &\leq\|x(z)\|_{E}^2  \|y(z)\|_{E}^2.
    \end{array}$$ Thus, for all $z \in \Tt,$
   $$\label{wenorm} \|(x\dot{\wedge} y)(z)\|_{\we^2 E} \leq  \|x(z)\|_{E} \|y(z)\|_{E}. $$
 By Definition \ref{a.12}, 
\begin{equation}\label{eqwe} \| x\telwe y \|_{L^1(\Tt, \we^2 E)} = \displaystyle\frac{1}{2\pi} \int\limits_0^{2\pi} \|(x\dot{\wedge} y)(e^{i\theta})\|_{\we^2 E} \;d\theta \leq 
\frac{1}{2\pi} \int\limits_0^{2\pi}\|x(e^{i\theta})\|_E \|y(e^{i\theta})\|_E \; d\theta.\end{equation}     
     
\noindent Now by H\"{o}lder's inequality, 
\begin{equation}\label{hl} \frac{1}{2\pi}\int\limits_0^{2\pi} \|x(e^{i\theta})\|_E \|y(e^{i\theta})\|_E \;d\theta \leq  \left(\frac{1}{2\pi}\int\limits_0^{2\pi} \|x(e^{i\theta})\|_E^p \; d\theta \right)^{1/p} 
\left(\frac{1}{2\pi}\int\limits_0^{2\pi} \|y(e^{i\theta})\|_E^q \; d\theta \right)^{1/q}. \end{equation} 
                                                                           
\noindent By inequalities (\ref{eqwe}) and (\ref{hl}), $x\dot{\wedge} y \in L^1(\mathbb{T},\we^2E)$ and the inequality \eqref{Hwedge} holds.
\end{proof}
\begin{proposition}\label{wejanalytic}
Let $E$ be a Hilbert space and $x_1, x_2, \dots, x_n \colon \Dd \to E$ be  analytic $E$-valued maps on $\Dd.$
Then, $$x_1 \telwe x_2\telwe \dots \telwe x_n\colon \Dd \to \we^n E $$ is also analytic on $\Dd$ and 
$$(x_1 \telwe x_2 \telwe \dots \telwe x_n)'(z) = x_1'(z)\we x_2(z) \we \dots \we x_n(z)  + \dots + x_1(z) \we x_2(z)\we \dots \we x_n'(z)$$ for all $z\in \Dd.$
\end{proposition}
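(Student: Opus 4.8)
The plan is to reduce the claim to the scalar case via an appropriate pairing, exploiting that analyticity of a vector-valued map $g\colon\Dd\to\mathcal H$ is equivalent (for $\mathcal H$ a Hilbert space) to weak analyticity, i.e. $z\mapsto\langle g(z),v\rangle$ being analytic for every $v$ in a dense subset of $\mathcal H$. First I would fix an orthonormal basis $\{e_k\}$ of $E$ and observe that, by Proposition \ref{we}, the vectors $\{e_{k_1}\we\dots\we e_{k_n}:k_1<\dots<k_n\}$ form an orthonormal basis of $\we^n E$; pairing $(x_1\telwe\dots\telwe x_n)(z)$ against such a basis vector yields, again by Proposition \ref{we}, the $n\times n$ determinant
\[
\det\bigl(\langle x_i(z),e_{k_\ell}\rangle_E\bigr)_{i,\ell=1}^n .
\]
Each entry $\langle x_i(z),e_{k_\ell}\rangle_E$ is a scalar analytic function on $\Dd$ by hypothesis, so this determinant, being a polynomial in finitely many scalar analytic functions, is analytic on $\Dd$. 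Hence $z\mapsto(x_1\telwe\dots\telwe x_n)(z)$ is weakly analytic, and therefore analytic, as a $\we^n E$-valued map; local boundedness needed to pass from weak to strong analyticity follows from the estimate in Proposition \ref{weopiscontinuous1} (the Hadamard–Cauchy–Schwarz inequality \eqref{Had-C-S-inq}) together with local boundedness of the $x_i$, which holds because analytic maps are continuous.

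For the derivative formula I would argue one of two ways. The cleaner route: the multilinear map $\Lambda\colon E^n\to\we^n E$ is bounded by Proposition \ref{weopiscontinuous1}, hence a bounded $n$-linear map between Banach spaces, and such maps are (Fr\'echet) differentiable with the expected Leibniz-type derivative; composing the differentiable curve $z\mapsto(x_1(z),\dots,x_n(z))\in E^n$ with $\Lambda$ and applying the chain rule gives exactly
\[
(x_1\telwe\dots\telwe x_n)'(z)=\sum_{i=1}^n x_1(z)\we\dots\we x_i'(z)\we\dots\we x_n(z).
\]
Alternatively, one can verify the formula weakly: pairing both sides against a basis vector $e_{k_1}\we\dots\we e_{k_n}$ reduces the identity to differentiating the scalar determinant $\det(\langle x_i(z),e_{k_\ell}\rangle)$, and the classical product rule for determinants (differentiate one row at a time) reproduces precisely the claimed sum, after re-interpreting each term via Proposition \ref{we}. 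Since the set of such basis vectors is total in $\we^n E$ and both sides are already known to be continuous (indeed analytic) $\we^n E$-valued maps, weak equality forces equality.

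The main obstacle is not any single computation but handling the passage from weak to strong analyticity cleanly in the infinite-dimensional setting: one must ensure the pairing functionals separate points of $\we^n E$ (handled by the orthonormal basis computation above) and that local boundedness is in place (handled by \eqref{Had-C-S-inq}); with those two facts the standard Hilbert-space fact ``weakly analytic $+$ locally bounded $\Rightarrow$ analytic'' applies. An equivalent and perhaps more self-contained tactic, avoiding any weak-analyticity theorem, is to prove the derivative formula first by the $\Lambda$-is-bounded-multilinear argument of the previous paragraph — this simultaneously establishes differentiability, hence analyticity, and the formula in one stroke — so I would likely present that as the primary argument and relegate the basis/determinant computation to a remark.
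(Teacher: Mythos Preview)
Your proposal is correct and aligns with the paper's approach: the paper gives only a one-line sketch, stating that the result ``follows from Proposition \ref{we} and the continuity of $\Lambda$, see Hadamard's inequalities \eqref{Had-C-S-inq}'', which are precisely the two ingredients you invoke (the determinant pairing and the boundedness of the multilinear map $\Lambda$). Your write-up simply fleshes out what the paper leaves implicit; the bounded-multilinear/chain-rule argument you flag as primary is the most direct realization of the paper's hint.
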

 The proof is straightforward. It follows from Proposition \ref{we} and the continuity of $\Lambda$, see Hadamard's inequalities \eqref{Had-C-S-inq}.

\begin{proposition}\label{a.19}
Let $E$ be a separable Hilbert space. Suppose $x,y \in H^2(\Dd,E).$ Then 
 $$x\dot{\we} y \in H^1(\Dd, \we^2 E).$$
\end{proposition}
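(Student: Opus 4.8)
The plan is to factor $x\telwe y$ through the scalar Hardy space theory by exploiting the norm estimate already available pointwise. First I would recall from Proposition \ref{we} (specialised to $p=2$) that for each $z$ in $\Dd$,
\[
\|(x\telwe y)(z)\|_{\we^2E}^2 = \|x(z)\|_E^2\|y(z)\|_E^2 - |\langle x(z),y(z)\rangle_E|^2 \le \|x(z)\|_E^2\|y(z)\|_E^2,
\]
so that $\|(x\telwe y)(z)\|_{\we^2E} \le \|x(z)\|_E\,\|y(z)\|_E$ for every $z\in\Dd$. This is the pointwise Cauchy--Schwarz/Gram-determinant bound, exactly as in the proof of Proposition \ref{a.18}, but now on the disc rather than the circle.

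Next I would establish analyticity of $x\telwe y$ on $\Dd$: this is immediate from Proposition \ref{wejanalytic} with $n=2$, which gives that $x\telwe y\colon\Dd\to\we^2E$ is analytic (with the expected product-rule derivative). So it remains only to control the $H^1$ norm, i.e. to show that
\[
\sup_{0<r<1}\frac{1}{2\pi}\int_0^{2\pi}\|(x\telwe y)(re^{i\theta})\|_{\we^2E}\,d\theta < \infty.
\]
Using the pointwise bound above, the integrand is dominated by $\|x(re^{i\theta})\|_E\,\|y(re^{i\theta})\|_E$, and then the Cauchy--Schwarz inequality in $L^2(0,2\pi)$ gives, for each fixed $r$,
\[
\frac{1}{2\pi}\int_0^{2\pi}\|x(re^{i\theta})\|_E\|y(re^{i\theta})\|_E\,d\theta \le \left(\frac{1}{2\pi}\int_0^{2\pi}\|x(re^{i\theta})\|_E^2\,d\theta\right)^{1/2}\left(\frac{1}{2\pi}\int_0^{2\pi}\|y(re^{i\theta})\|_E^2\,d\theta\right)^{1/2} \le \|x\|_{H^2}\|y\|_{H^2},
\]
by Definition \ref{a.12}(ii). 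Taking the supremum over $r\in(0,1)$ yields $\|x\telwe y\|_{H^1(\Dd,\we^2E)} \le \|x\|_{H^2}\|y\|_{H^2} < \infty$, which is the claim (and incidentally the natural norm estimate one would want to record alongside it, in parallel with \eqref{Hwedge}).

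The only genuinely delicate point is the definition of $H^1(\Dd,\we^2E)$ itself: Definition \ref{a.12}(ii) is stated for $H^p(\Dd,E)$ with $E$ a Hilbert space, and here the target $\we^2E$ is the Hilbert space of antisymmetric tensors, which is legitimate since $\we^2E$ is itself a separable Hilbert space (a closed subspace of $\otimes_H^2E$, by Theorem \ref{a.10}). So one should note explicitly that the definition applies with $E$ replaced by $\we^2E$, and that $x\telwe y$ is analytic as an $\we^2E$-valued map so that the $H^p$ growth condition is even meaningful. Beyond that, the argument is just the disc analogue of Proposition \ref{a.18} with $p=q=2$, and there is no real obstacle.
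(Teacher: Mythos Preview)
Your proof is correct and follows essentially the same approach as the paper: analyticity via Proposition~\ref{wejanalytic}, the pointwise bound $\|(x\telwe y)(z)\|_{\we^2E}\le\|x(z)\|_E\|y(z)\|_E$ from Proposition~\ref{we}, and then Cauchy--Schwarz (H\"older with $p=q=2$) on circles of radius $r$ to obtain $\|x\telwe y\|_{H^1}\le\|x\|_{H^2}\|y\|_{H^2}$. Your added remark that $\we^2E$ is itself a separable Hilbert space, so that Definition~\ref{a.12}(ii) legitimately applies to the target space, is a worthwhile point of rigour that the paper leaves implicit.
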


\begin{proof}
By Proposition \ref{wejanalytic}, $ x\dot{\we} y$ is analytic on $\Dd.$ By Proposition \ref{we}, for $0<r<1$ and $ 0\leq \theta \leq 2\pi,$ $$ \| (x \telwe y) (re^{i\theta})\|_{\we^2 E} \leq \|x(re^{i\theta})\|_E \|y(re^{i\theta})\|_E. $$

\noindent By Proposition \ref{we} and by Definition \ref{a.12}, 
 $$\begin{array}{cllll}
    \|x \telwe y \|_{H^1(\Dd,\we^2E)} &=\displaystyle \sup\limits_{0<r<1} \left(\frac{1}{2\pi}\int_0^{2\pi} \|(x \telwe y)(re^{i\theta})\|_{\we^2E}\; d\theta\right)\vspace{2ex} \\
    &\leq \displaystyle \sup\limits_{0<r<1} \left(\frac{1}{2\pi}\int_0^{2\pi} \|x(re^{i\theta})\|_{E} \|y(re^{i\theta})\|_{E}\; d\theta\right),
   \end{array}$$ for $0<r<1$ and $ 0\leq \theta \leq 2\pi.$
Also, by H\"{o}lder's inequality,  for $0<r<1$ and $ 0\leq \theta \leq 2\pi,$
$$ \frac{1}{2\pi}\int\limits_0^{2\pi} \|x(re^{i\theta})\|_E \|y(re^{i\theta})\|_E \;d\theta \leq  \left(\frac{1}{2\pi}\int\limits_0^{2\pi} \|x(re^{i\theta})\|_E^2 \; d\theta \right)^{1/2} 
\left(\frac{1}{2\pi}\int\limits_0^{2\pi} \|y(re^{i\theta})\|_E^2 \; d\theta \right)^{1/2},  $$hence
$$  \|x \telwe y \|_{H^1(\Dd,\we^2E)} \leq \|x\|_{H^2(\Dd,E)} \|y\|_{H^2(\Dd,E)}.$$

\noindent Consequently, $x\telwe y \in H^1 (\Dd, \we^2 E).$
\end{proof}

 \begin{proposition}\label{xweyh2}
 Let $E$ be a separable Hilbert space, let $x\in H^2(\Dd,E)$ and let $y \in H^{\infty}(\Dd, E).$ Then
  $$x \dot{\wedge} y \in H^2 (\Dd, \wedge^2 E).$$
 \end{proposition}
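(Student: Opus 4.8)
The plan is to mimic the proof of Proposition \ref{a.19}, replacing the H\"older step by a cruder estimate that exploits the boundedness of $y$. First I would invoke Proposition \ref{wejanalytic} to conclude that $x \telwe y \colon \Dd \to \we^2 E$ is analytic on $\Dd$; this takes care of the analyticity requirement in Definition \ref{a.12}(ii).

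Next I would bound the pointwise norm. By Proposition \ref{we}, for every $z \in \Dd$,
$$\|(x\telwe y)(z)\|_{\we^2 E}^2 = \|x(z)\|_E^2\|y(z)\|_E^2 - |\langle x(z), y(z)\rangle_E|^2 \le \|x(z)\|_E^2\|y(z)\|_E^2,$$
and since $y \in H^\infty(\Dd, E)$ we have $\|y(z)\|_E \le \|y\|_\infty$ for all $z \in \Dd$; hence $\|(x\telwe y)(z)\|_{\we^2 E} \le \|y\|_\infty \|x(z)\|_E$. Integrating over circles, for each $0 < r < 1$,
$$\frac{1}{2\pi}\int_0^{2\pi}\|(x\telwe y)(re^{i\theta})\|_{\we^2 E}^2\,d\theta \le \|y\|_\infty^2\,\frac{1}{2\pi}\int_0^{2\pi}\|x(re^{i\theta})\|_E^2\,d\theta \le \|y\|_\infty^2\,\|x\|_{H^2(\Dd,E)}^2,$$
and taking the supremum over $r \in (0,1)$ yields $\|x\telwe y\|_{H^2(\Dd,\we^2 E)} \le \|y\|_\infty\,\|x\|_{H^2(\Dd,E)} < \infty$, so that $x\telwe y \in H^2(\Dd, \we^2 E)$.

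There is no genuine obstacle here: the argument is entirely parallel to that of Proposition \ref{a.19}, and in fact simpler, because the uniform bound on $y$ allows $\|y\|_\infty$ to be pulled out of the integral directly instead of invoking H\"older's inequality. The only point needing a moment's care is that the pointwise estimate $\|x(z)\we y(z)\|_{\we^2 E} \le \|x(z)\|_E\|y(z)\|_E$ comes from the Gram-determinant formula of Proposition \ref{we} for the inner product on $\we^2 E$.
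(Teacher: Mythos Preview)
Your proof is correct and follows essentially the same line as the paper's: invoke Proposition~\ref{wejanalytic} for analyticity, use the Gram-determinant formula to obtain the pointwise bound $\|x(z)\wedge y(z)\|_{\we^2 E}\le \|x(z)\|_E\|y(z)\|_E$, and then pull $\|y\|_\infty$ out of the integral. The only cosmetic difference is that the paper cites Proposition~\ref{a.18} for the pointwise estimate (which in turn relies on Proposition~\ref{we}), whereas you appeal to Proposition~\ref{we} directly; your reference is if anything the cleaner one.
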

 \begin{proof}
 
 By Proposition \ref{wejanalytic}, $ x\dot{\we} y$ is analytic on $\Dd.$ By Proposition \ref{a.18}, for $$0<r<1, \quad 0\leq \theta \leq 2\pi,$$we have 
 $$\| (x \telwe y)(re^{i\theta})\|_{\we^2 E}\leq \|x(re^{i\theta})\|_E \|y(re^{i\theta})\|_E.$$
  Thus,
 \[ \begin{array}{cllllllll}
  \| x \telwe y\|_{H^2(\Dd,\we^2 E)}&=  \displaystyle \sup\limits_{0<r<1} \left(\frac{1}{2\pi}\int_0^{2\pi} \|(x\telwe y)(re^{i\theta})\|_{\we^2 E}^2 \;d\theta\right)^{1/2}\\
  &\leq \displaystyle \sup\limits_{0<r<1} \left(\frac{1}{2\pi}\int_0^{2\pi} \|x(re^{i\theta})\|_{E}^2 \|y(re^{i\theta})\|_{E}^2 d\theta\right)^{1/2} \\
&\leq \|y\|_\infty   \sup\limits_{0<r<1} \left(\displaystyle\frac{1}{2\pi}\int_0^{2\pi} \|x(re^{i\theta})\|_{E}^2 d\theta \right)^{1/2} < \infty .\end{array}
\]\end{proof}

\begin{definition} Let $E$ be a Hilbert space.
We say that a family of $\{ f_\lambda \}_{ \lambda \in \Lambda}$
of maps from $\Tt$ to $E$ is {\em pointwise orthonormal} on $\Tt$, if for all $z$ in a set of full measure in $\Tt$, 
the set of vectors $\{ f_\lambda(z) \}_{ \lambda \in \Lambda}$ is 
orthonormal in $E$.
\end{definition}

\begin{definition}\label{POC-def} Let $E$ be a separable Hilbert space.
Let $F$ be a subspace of $L^2(\Tt, E)$ and let $X$ be a subset of $L^2(\Tt,E).$
We define the \emph{pointwise orthogonal complement} of $X$ in $F$ to be the set
$$\Poc(X,F) = \{ f \in F: f(z)\perp \{x(z):x \in X\}\; \text{for almost all}\;z \in \Tt\}.$$
\end{definition}

 \begin{proposition}\label{wel2conv}  Let $E$ be a separable Hilbert space, and let 
$\; \xi_0,\xi_1, \cdots, \xi_j \in L^\infty(\Tt,E) $ be a pointwise orthonormal set on $\Tt$, and let $x\in L^2(\Tt,E)$.  Then
  $$ \xi_0 \dot{\we} \xi_1 \dot{\we} \cdots \dot{\we} \xi_j \dot{\we} x \in L^2 (\Tt, \wedge^{j+2} E),$$
and 
\[
\| \xi_0 \dot{\we} \xi_1 \dot{\we} \cdots \dot{\we} \xi_j \dot{\we} x \|_{L^2 (\Tt, \wedge^{j+2} E)} \le
\| x \|_{L^2(\Tt,E)}.
\]
Furthemore,
\[
\| \xi_0 \dot{\we} \xi_1 \dot{\we} \cdots \dot{\we} \xi_j \dot{\we} x \|_{L^2 (\Tt, \wedge^{j+2} E)} =
\| x \|_{L^2(\Tt,E)}
\]
if and only if $ x \in \Poc(\{\xi_0,\xi_1, \cdots, \xi_j\}, L^2(\Tt,E))$.
 \end{proposition}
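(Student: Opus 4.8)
The plan is to reduce everything to a pointwise computation and then integrate, using Lemma \ref{weon} as the key ingredient. First I would fix a point $z$ in the set of full measure on which $\{\xi_0(z),\dots,\xi_j(z)\}$ is orthonormal in $E$, and apply Lemma \ref{weon} with $\{u_1,\dots,u_{j+1}\}=\{\xi_0(z),\dots,\xi_j(z)\}$ and $x=x(z)$. This gives
\[
\|(\xi_0\telwe\xi_1\telwe\cdots\telwe\xi_j\telwe x)(z)\|_{\we^{j+2}E}^2
=\Big\|x(z)-\sum_{i=0}^j\langle x(z),\xi_i(z)\rangle_E\,\xi_i(z)\Big\|_E^2
=\|x(z)\|_E^2-\sum_{i=0}^j|\langle x(z),\xi_i(z)\rangle_E|^2,
\]
the last equality by Pythagoras, since $x(z)-\sum_i\langle x(z),\xi_i(z)\rangle\xi_i(z)$ is orthogonal to each $\xi_i(z)$. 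In particular the pointwise norm is bounded by $\|x(z)\|_E$, which is square-integrable because $x\in L^2(\Tt,E)$; since $z\mapsto\xi_i(z)$ are bounded and $z\mapsto x(z)$ is measurable, the function $z\mapsto(\xi_0\telwe\cdots\telwe\xi_j\telwe x)(z)$ is measurable, hence lies in $L^2(\Tt,\we^{j+2}E)$.

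Next I would integrate the pointwise identity over $\Tt$ with respect to normalised Lebesgue measure. This yields
\[
\|\xi_0\telwe\cdots\telwe\xi_j\telwe x\|_{L^2(\Tt,\we^{j+2}E)}^2
=\|x\|_{L^2(\Tt,E)}^2-\frac{1}{2\pi}\int_0^{2\pi}\sum_{i=0}^j|\langle x(\eiu),\xi_i(\eiu)\rangle_E|^2\,d\theta.
\]
The inequality $\|\xi_0\telwe\cdots\telwe\xi_j\telwe x\|_{L^2}\le\|x\|_{L^2}$ is then immediate, since the subtracted integral is of a nonnegative integrand. For the equality case: the two norms agree if and only if that integral vanishes, which (the integrand being nonnegative and measurable) happens if and only if $\sum_{i=0}^j|\langle x(z),\xi_i(z)\rangle_E|^2=0$ for almost every $z\in\Tt$, i.e. $\langle x(z),\xi_i(z)\rangle_E=0$ for all $i=0,\dots,j$ and almost every $z$. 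Since $\{x(z):x\in\{\xi_0,\dots,\xi_j\}\}=\{\xi_0(z),\dots,\xi_j(z)\}$ and $x(z)\perp\xi_i(z)$ for all $i$ is exactly the condition defining $\Poc(\{\xi_0,\dots,\xi_j\},L^2(\Tt,E))$, this is precisely $x\in\Poc(\{\xi_0,\dots,\xi_j\},L^2(\Tt,E))$.

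I do not anticipate a serious obstacle; the argument is essentially an integration of Lemma \ref{weon}. The only points requiring a little care are the measurability of $z\mapsto(\xi_0\telwe\cdots\telwe\xi_j\telwe x)(z)$ as a $\we^{j+2}E$-valued function (which follows from the continuity of the multilinear map $\Lambda$, Proposition \ref{weopiscontinuous1}, composed with the measurable maps $\xi_i$, $x$, together with separability of $E$), and the standard fact that a nonnegative measurable function with zero integral vanishes almost everywhere, applied to obtain the ``only if'' direction of the equality statement. One should also note that ``almost every $z$'' in the orthonormality hypothesis and in the conclusion are compatible, since a countable intersection of full-measure sets is again of full measure.
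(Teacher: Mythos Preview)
Your proposal is correct and follows essentially the same route as the paper: apply Lemma~\ref{weon} pointwise to obtain $\|\xi_0(z)\we\cdots\we\xi_j(z)\we x(z)\|^2=\|x(z)\|^2-\sum_i|\langle x(z),\xi_i(z)\rangle|^2\le\|x(z)\|^2$, then integrate. In fact your version is more complete than the paper's, which stops after the inequality and does not spell out either the measurability of the wedge product or the equality characterisation via $\Poc$; your treatment of both points is sound.
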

 \begin{proof}
By Lemma \ref{weon}, for almost all $z \in \Tt$, 
\[
\|\xi_0(z) \we \xi_1(z) \we \cdots \we \xi_j(z) \we x(z) \|^2_{\we^{j+2}E} =
\| x(z)\|^2_{E} - \displaystyle\sum\limits_{i=0}^j 
|\langle x(z), \xi_i(z) \rangle_{E}|^2 \le \| x(z) \|^2_{E}.
\]
 Thus,
\[ \begin{array}{cllllllll}
  &\| \xi_0 \dot{\we} \xi_1 \dot{\we} \cdots \dot{\we} \xi_j \dot{\we} x\|_{L^2(\Tt,\wedge^{j+2} E)}\vspace{2ex}\\
&=  \displaystyle \left(\frac{1}{2\pi}\int_0^{2\pi} 
\|\xi_0(e^{i\theta}) \we \xi_1(e^{i\theta}) \we \cdots \we \xi_j(e^{i\theta})\we x
(e^{i\theta})\|_{\we^{j+2} E}^2 \;d\theta\right)^{1/2}\\
  &\leq \displaystyle \left(\frac{1}{2\pi}\int_0^{2\pi} \|x(e^{i\theta})\|_{E}^2 d\theta\right)^{1/2} = \| x \|_{L^2(\Tt,E)} < \infty. 
\end{array}
\]
\end{proof}

\section{Pointwise creation operators, orthogonal complements and linear spans}\label{orthog_complement}

\begin{definition}\label{pwcre} Let $E$ be a separable Hilbert space. 
	Let $\xi \in H^\infty(\Dd, E)$. We define the \emph{pointwise creation operator} $$C_\xi \colon H^2(\Dd,E) \to H^2(\Dd, \we^2E)$$ by 
	$$ C_\xi f = \xi \telwe f, \; \text{for} \; f \in H^2(\Dd,E).$$ 
\end{definition}

\begin{remark}\label{genfatouwe}  Let $E$ be a separable Hilbert space. Let $\xi \in H^\infty(\Dd, E)$ and let $f \in H^2(\Dd, E)$.
 By the generalized Fatou's Theorem \cite[Chapter V]{NagyFoias}, the radial limits 
	$$\lim_{r\to 1}\xi(r\eiu)\underset{\|\cdot\|_E}{=} \tilde{\xi}(\eiu), \quad \lim_{r\to 1} f(r\eiu)\underset{\|\cdot\|_E}{=} \tilde{f}(\eiu) \;\;
(0<r<1)
$$ exist almost everywhere on $\Tt$ and define functions $\tilde{\xi} \in L^\infty(\Tt,E)$ and $\tilde{f}\in L^2(\Tt,E)$ respectively, which satisfy the relations 
	$$ \lim_{r \to 1} \| \xi(r\eiu) - \tilde{\xi}(\eiu) \|_E=0,\quad \lim_{r \to 1} \| f(r\eiu) - \tilde{f}(\eiu) \|_{E}=0 \;\;
(0<r<1) $$ 
for almost all $ \eiu  \in \Tt$.
\end{remark}\index{$\tilde{f}$}

\begin{lemma}\label{xitelweh21} Let $E$ be a separable Hilbert space.
 Let $\xi\in H^\infty(\Dd, E)$ and let $f \in H^2(\Dd, E).$ Then the radial limits $\lim_{r \to 1} (\xi(r\eiu) \we f(r\eiu))$ exist for almost all $\eiu \in \Tt$ and define functions in $L^2(\Tt,\we^2 E).$
\end{lemma}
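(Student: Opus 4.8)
The plan is to combine the pointwise estimate from Proposition \ref{we} with the Fatou-type radial limit results from Remark \ref{genfatouwe} to establish convergence in $L^2(\Tt, \we^2 E)$. First I would invoke Remark \ref{genfatouwe} to obtain the radial limits $\tilde\xi(\eiu) = \lim_{r\to 1}\xi(r\eiu)$ and $\tilde f(\eiu) = \lim_{r\to 1}f(r\eiu)$, existing for almost all $\eiu \in \Tt$, with $\tilde\xi \in L^\infty(\Tt,E)$ and $\tilde f \in L^2(\Tt,E)$. Since the multilinear map $\Lambda$ is bounded (Proposition \ref{weopiscontinuous1}), and in fact by the explicit estimate following from Proposition \ref{we} one has $\|a \we b\|_{\we^2 E} \le \|a\|_E\|b\|_E$ for all $a,b\in E$, continuity of the wedge on elementary tensors gives, for almost every $\eiu$,
\[
\lim_{r\to 1}\big(\xi(r\eiu)\we f(r\eiu)\big) = \tilde\xi(\eiu)\we\tilde f(\eiu) \quad\text{in}\quad \we^2 E,
\]
because $\xi(r\eiu)\we f(r\eiu) - \tilde\xi(\eiu)\we\tilde f(\eiu) = \big(\xi(r\eiu)-\tilde\xi(\eiu)\big)\we f(r\eiu) + \tilde\xi(\eiu)\we\big(f(r\eiu)-\tilde f(\eiu)\big)$, and each term has $\we^2 E$-norm bounded by a product in which one factor tends to $0$ and the other stays bounded (using that $\|f(r\eiu)\|_E$ is, for a.e.\ fixed $\eiu$, bounded as $r\to 1$ by the existence of the radial limit, and $\|\tilde\xi(\eiu)\|_E \le \|\xi\|_\infty$). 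This establishes existence of the radial limits almost everywhere and identifies the limit function as $\tilde\xi \dot{\we}\tilde f$.

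It remains to check that this limit function lies in $L^2(\Tt,\we^2 E)$. This is immediate from Proposition \ref{xweyh2} together with the pointwise bound: since $\tilde\xi \in L^\infty(\Tt,E)$ and $\tilde f\in L^2(\Tt,E)$, for almost every $\eiu$ we have $\|\tilde\xi(\eiu)\we\tilde f(\eiu)\|_{\we^2 E} \le \|\tilde\xi(\eiu)\|_E\|\tilde f(\eiu)\|_E \le \|\xi\|_\infty\,\|\tilde f(\eiu)\|_E$, and the right-hand side is square-integrable over $\Tt$. Hence $\tilde\xi\dot{\we}\tilde f \in L^2(\Tt,\we^2 E)$ with $\|\tilde\xi\dot{\we}\tilde f\|_{L^2(\Tt,\we^2 E)} \le \|\xi\|_\infty \|\tilde f\|_{L^2(\Tt,E)}$.

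The main obstacle I anticipate is the subtlety in the first step: one must be careful that the convergence $\xi(r\eiu)\we f(r\eiu) \to \tilde\xi(\eiu)\we\tilde f(\eiu)$ is genuinely justified pointwise a.e.\ rather than merely in some integrated sense. The cross-term $\tilde\xi(\eiu)\we(f(r\eiu)-\tilde f(\eiu))$ is controlled by $\|\xi\|_\infty\|f(r\eiu)-\tilde f(\eiu)\|_E \to 0$ for a.e.\ $\eiu$ directly from Remark \ref{genfatouwe}; the term $(\xi(r\eiu)-\tilde\xi(\eiu))\we f(r\eiu)$ needs $\|f(r\eiu)\|_E$ to remain bounded as $r\to 1$, which holds for a.e.\ $\eiu$ precisely because the radial limit of $f$ exists there. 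One could alternatively, if preferred, phrase the whole argument via the fact (from Proposition \ref{xweyh2}) that $\xi\dot{\we}f \in H^2(\Dd,\we^2 E)$ and then apply the generalized Fatou theorem directly to this $\we^2 E$-valued $H^2$ function, which gives existence of radial limits in $L^2(\Tt,\we^2 E)$ in one stroke; identifying those limits with $\tilde\xi\dot{\we}\tilde f$ a.e.\ then still requires the pointwise computation above, but the membership in $L^2$ comes for free. I would present this second route as the cleaner option.
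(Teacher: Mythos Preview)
Your proposal is correct and follows essentially the same approach as the paper's own proof: both invoke Remark \ref{genfatouwe} for the radial limits of $\xi$ and $f$, use continuity of the wedge (Proposition \ref{weopiscontinuous1} and the bound $\|a\we b\|_{\we^2 E}\le\|a\|_E\|b\|_E$) to deduce pointwise convergence of $\xi(r\eiu)\we f(r\eiu)$ to $\tilde\xi(\eiu)\we\tilde f(\eiu)$, and then check $L^2$-membership of the limit. Your explicit cross-term decomposition and your ``alternative route'' via Proposition \ref{xweyh2} plus Fatou applied to the $\we^2 E$-valued $H^2$ function are exactly the details the paper's terse proof leaves implicit; if anything, your write-up is more careful than the paper's.
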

\begin{proof}
By Proposition \ref{weopiscontinuous1}, the bilinear operator $\Lambda\colon E\times E \to \we^2E$ is a continuous operator for the norms of $E$ and $\we^2 E.$ By Remark \ref{genfatouwe}, the functions $\xi \in H^\infty(\Dd,E)$ and \newline $f\in H^2(\Dd,E)$ have radial limit functions $\tilde{\xi} \in L^\infty(\Tt,E)$ and $\tilde{f} \in L^2(\Tt,E)$. Also, by Proposition \ref{xweyh2}, $\xi \telwe f \in H^2(\Dd,\we^2E).$ Hence
$$\lim_{r \to 1} \| \xi(r\eiu)\we f(r\eiu) - \tilde{\xi}(\eiu) \we \tilde{f}(\eiu) \|_{\we^2E}=0  \quad \text{ almost everywhere on}\quad \Tt$$ and we conclude that

$$ \lim_{r\to 1} (\xi(r\eiu) \we f(r\eiu) ) \underset{\|\cdot\|_{\we^2E}}{=} \tilde{\xi}(\eiu) \we \tilde{f}(\eiu)\; \text{almost everywhere on}\; \Tt.$$

\noindent This shows that the radial limits 
$$ \lim_{r\to 1} (\xi(r\eiu) \we f(r\eiu) ) $$ exist almost everywhere on $\Tt$ and, by Lemma \ref{a.18}, define functions in $L^2(\Tt,\we^2E).$
 Hence one can consider $(C_\xi f)(z)= (\xi \telwe f)(z)$ to be defined for either all $z \in \Dd$ or for almost all $z \in \Tt.$
\end{proof}

\begin{remark}\label{H2subsetL2}  Let $E$ be a separable Hilbert space.
By \cite[Chapter 5, Section 1]{NagyFoias}, for any separable Hilbert space $E$, the map 
$f \mapsto \tilde{f}$ is an isometric embedding of $H^2(\Dd,E)$ in $L^2(\Tt,E)$, where 
$ \tilde{f}(\eiu)= \lim_{r \to 1} f(r\eiu).$ Since $H^2(\Dd,E)$ is complete and the embedding is isometric, the image of the embedding is complete, and therefore is closed in $L^2(\Tt, E).$ Therefore, the space $H^2(\Dd,E)$ is identified isometrically with a closed linear subspace of $L^2(\Tt, E).$ 
In future we shall use the same notation for $f$ and $\tilde{f}.$
\end{remark}

Our next aim is to show that $ \Poc(X,F)$ is a closed subspace of $F.$ We are going to need the following results.
 \begin{lemma}\label{fiscontl2}
 Let $E$ be a Hilbert space and let $x \in L^2(\Tt,E).$ The function \linebreak$\phi \colon L^2(\T, E) \to \mathbb{C}$ given by 
	$$
\phi (g) = \displaystyle\frac{1}{2\pi}\int\limits_0^{2\pi} |\langle g(e^{i\theta}) , x (e^{i\theta}) \rangle_{E}| \; d\theta
$$  
is continuous.
\end{lemma}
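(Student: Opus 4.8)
The plan is to verify directly that $\phi$ is continuous by estimating $|\phi(g_1) - \phi(g_2)|$ for $g_1, g_2 \in L^2(\Tt, E)$, using the reverse triangle inequality for absolute values, the Cauchy-Schwarz inequality in $E$, and then the Cauchy-Schwarz inequality for the integral (i.e. H\"older's inequality with $p = q = 2$). First I would observe that for complex numbers $a, b$ one has $\bigl| |a| - |b| \bigr| \le |a - b|$, so that for almost every $\theta$,
\[
\Bigl| \, |\langle g_1(e^{i\theta}), x(e^{i\theta})\rangle_E| - |\langle g_2(e^{i\theta}), x(e^{i\theta})\rangle_E| \, \Bigr| \le |\langle g_1(e^{i\theta}) - g_2(e^{i\theta}), x(e^{i\theta})\rangle_E| \le \|g_1(e^{i\theta}) - g_2(e^{i\theta})\|_E \, \|x(e^{i\theta})\|_E,
\]
the last step by Cauchy-Schwarz in $E$.

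Next I would integrate this pointwise bound over $\Tt$. Since
\[
|\phi(g_1) - \phi(g_2)| \le \frac{1}{2\pi}\int_0^{2\pi} \Bigl| \, |\langle g_1(e^{i\theta}), x(e^{i\theta})\rangle_E| - |\langle g_2(e^{i\theta}), x(e^{i\theta})\rangle_E| \, \Bigr| \, d\theta,
\]
combining with the previous estimate and applying the Cauchy-Schwarz inequality for integrals gives
\[
|\phi(g_1) - \phi(g_2)| \le \frac{1}{2\pi}\int_0^{2\pi} \|g_1(e^{i\theta}) - g_2(e^{i\theta})\|_E \, \|x(e^{i\theta})\|_E \, d\theta \le \|g_1 - g_2\|_{L^2(\Tt,E)} \, \|x\|_{L^2(\Tt,E)}.
\]
Thus $\phi$ is Lipschitz with constant $\|x\|_{L^2(\Tt,E)}$, which in particular gives continuity; one should note that the integrand $\langle g(e^{i\theta}), x(e^{i\theta})\rangle_E$ is measurable (by separability of $E$ and measurability of $g, x$) and that the same Cauchy-Schwarz estimate shows $\phi(g)$ is finite for every $g \in L^2(\Tt,E)$, so $\phi$ is well-defined.

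There is no real obstacle here; the only point requiring a moment's care is the measurability and finiteness of $\phi(g)$, which is handled by the separability hypothesis on $E$ (so that $\theta \mapsto \langle g(e^{i\theta}), x(e^{i\theta})\rangle_E$ is measurable) together with the $L^2$-$L^2$ H\"older bound. Everything else is a routine chain of standard inequalities, and the argument is essentially identical to the proof that the inner product on a Hilbert space, or an $L^1$-pairing, is continuous in each variable.
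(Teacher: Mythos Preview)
Your proof is correct and follows essentially the same approach as the paper: both use the reverse triangle inequality pointwise, then the Cauchy--Schwarz inequality in $E$, and finally the Cauchy--Schwarz inequality for the integral to bound $|\phi(g_1)-\phi(g_2)|$ by $\|g_1-g_2\|_{L^2(\Tt,E)}\,\|x\|_{L^2(\Tt,E)}$. Your presentation is slightly more streamlined in that you conclude Lipschitz continuity directly rather than unwinding an $\varepsilon$--$\delta$ argument, and you add the useful remark on measurability and finiteness of $\phi(g)$, which the paper leaves implicit.
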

\begin{proof}
Consider $g_0 \in L^2(\Tt,E).$	For any $\epsilon>0,$ we are looking for a $\delta>0$ such that
 $$\| g -g_0\|_{L^2(\Tt,E)}=\left(\displaystyle\frac{1}{2\pi}\int\limits_0^{2\pi}
	\| g(e^{i\theta}) - g_0 (e^{i\theta})\|_{E}^2 \; d\theta\right)^{1/2}<\delta$$implies
	$$ |\phi (g) - \phi(g_0) | <\epsilon.$$
	
	\noindent Note that

$$\begin{array}{cllllll}	
	|\phi (g) - \phi(g_0) | &= \left| \displaystyle\frac{1}{2\pi}\int\limits_0^{2\pi} |\langle g(e^{i\theta}) , x (e^{i\theta}) \rangle_{E}| d\theta - \displaystyle\frac{1}{2\pi}\int\limits_0^{2\pi} |\langle g_0 (e^{i\theta}) , x (e^{i\theta}) \rangle_{E}| d\theta \right| \vspace{2ex}  \\
	&= \left| \displaystyle\frac{1}{2\pi}\int\limits_0^{2\pi} \left(|\langle g(e^{i\theta}) , x (e^{i\theta}) \rangle_{E}|-|\langle g_0 (e^{i\theta}) , x (e^{i\theta}) \rangle_{E}|\right) d\theta \right|.\end{array}$$
	For each $\eiu \in \Tt,$ by the reverse triangle inequality, the integrand satisfies 
	$$ \begin{array}{cllll} |\langle g(e^{i\theta}) , x (e^{i\theta}) \rangle_{E}|-|\langle g_0 (e^{i\theta}) , x (e^{i\theta}) \rangle_{E}|&\leq \left|\langle g(e^{i\theta}) , x (e^{i\theta}) \rangle_{E} - \langle g_0 (e^{i\theta}) , x (e^{i\theta}) \rangle_{E} \right|\vspace{2ex} \\
	&=| \langle(g\eiu)- g_0(\eiu), x(\eiu)\rangle_E|,\end{array}$$

	\begin{equation}\label{phileq}|\phi (g) - \phi(g_0) |\leq \displaystyle\frac{1}{2\pi}\int\limits_0^{2\pi} |\langle g(e^{i\theta})-  g_0 (e^{i\theta}), x (e^{i\theta}) \rangle_{E}| d\theta.\end{equation}
	
	\noindent By the Cauchy-Schwarz inequality,
	\begin{equation}\label{phil2<d}\begin{array}{cll} \begin{array}{cllll} &\displaystyle\frac{1}{2\pi}\int\limits_0^{2\pi} |\langle g(e^{i\theta})-  g_0 (e^{i\theta}), x (e^{i\theta}) \rangle_{E}|\; d\theta\vspace{2ex} \\ &\leq \left(
	\displaystyle\frac{1}{2\pi}\int\limits_0^{2\pi} \| g(e^{i\theta})-  g_0 (e^{i\theta})\|_{E}^2 \; d\theta \right)^{1/2} \left(
	\displaystyle\frac{1}{2\pi}\int\limits_0^{2\pi} \| x(e^{i\theta})\|_{E}^2 \; d\theta \right)^{1/2}. \end{array}\end{array}\end{equation}
For the given $\epsilon >0,$ let $\delta$ be equal to $\displaystyle\frac{\epsilon}{\|x\|_{L^2(\Tt,E)} +1},$ and let 
$$\left(\displaystyle\frac{1}{2\pi}\int\limits_0^{2\pi} \| g(e^{i\theta})-  g_0 (e^{i\theta})\|_{E}^2 \; d\theta \right)^{1/2} < \delta.$$ 
By equations  (\ref{phileq}) and (\ref{phil2<d}), 
$$\begin{array}{clll}|\phi (g) - \phi(g_0) |&\leq \left(	\displaystyle\frac{1}{2\pi}\int\limits_0^{2\pi} \| g(e^{i\theta})-  g_0 (e^{i\theta})\|_{E}^2 \; d\theta \right)^{1/2} \|x\|_{L^2(\Tt,E)} \vspace{2ex} \\ &< \displaystyle \frac{\epsilon}{\|x\|_{L^2(\Tt,E)}+1} \|x\|_{L^2(\Tt,E)} < \epsilon.\end{array}$$ Hence $\phi$  is a continuous function. \end{proof}

\begin{proposition}\label{vclosed} Let $E$ be a separable Hilbert space.
Let $\eta \in L^2(\Dd,E)$. Then
\begin{enumerate}
	\item[(i)] The space $V= \{ f \in H^2 (\Dd,E):  \langle f(z) , \eta(z) \rangle_{E} =0 \; \text{for almost all}\; z \in \Tt \}$ is a closed subspace of $H^2(\Dd, E).$ 
\item[(ii)] The space  $V=\{ f \in L^2(\Tt,E): \langle f(z), \eta(z) \rangle_{E} =0 \;\text{for almost all}\; z\in \Tt  \} $ is a closed subspace of $L^2(\Tt,E).$ 
\end{enumerate}
\end{proposition}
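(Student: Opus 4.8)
The plan is to present $V$, in each of the two cases, as the zero set of a continuous scalar-valued function, and then invoke Lemma \ref{fiscontl2}.

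First, in both (i) and (ii) the set $V$ is a linear subspace, by bilinearity of the pairing: if $f,g\in V$ and $\lambda\in\C$, then for almost all $z\in\Tt$ we have $\langle (f+\lambda g)(z),\eta(z)\rangle_E=\langle f(z),\eta(z)\rangle_E+\lambda\langle g(z),\eta(z)\rangle_E=0$. So the only issue is closedness.

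For part (ii), I would apply Lemma \ref{fiscontl2} with $x=\eta\in L^2(\Tt,E)$, obtaining that the function
\[
\phi\colon L^2(\Tt,E)\to\C,\qquad \phi(g)=\frac{1}{2\pi}\int_0^{2\pi}\bigl|\langle g(\eiu),\eta(\eiu)\rangle_E\bigr|\,d\theta,
\]
is continuous. Since the integrand is nonnegative and measurable, the elementary fact that a nonnegative integrable function with vanishing integral is zero almost everywhere shows that $\phi(g)=0$ if and only if $\langle g(z),\eta(z)\rangle_E=0$ for almost all $z\in\Tt$; that is, $V=\phi^{-1}(\{0\})$. As $\{0\}$ is closed in $\C$ and $\phi$ is continuous, $V$ is closed in $L^2(\Tt,E)$.

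For part (i), I would pass to boundary values: by Remark \ref{H2subsetL2} the map $f\mapsto\tilde f$ identifies $H^2(\Dd,E)$ isometrically with a closed linear subspace of $L^2(\Tt,E)$, and under this identification the space $V$ of part (i) is exactly the intersection of $H^2(\Dd,E)$ with the space $V$ of part (ii); equivalently, it is the zero set of the restriction $\phi|_{H^2(\Dd,E)}$, which is still continuous. Being the intersection of two closed subsets of $L^2(\Tt,E)$ (and $H^2(\Dd,E)$ carrying the subspace topology from $L^2(\Tt,E)$), $V$ is closed in $H^2(\Dd,E)$.

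There is no serious obstacle here; the two points that require a little care are, first, the measure-theoretic step that turns the pointwise orthogonality condition defining $V$ into the single scalar equation $\phi(g)=0$ (so that continuity of $\phi$ can be used), and second, the appeal to Remark \ref{H2subsetL2} to ensure that closedness in $L^2(\Tt,E)$ transfers correctly to closedness in $H^2(\Dd,E)$ for the norm topology of $H^2(\Dd,E)$.
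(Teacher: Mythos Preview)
Your proof is correct and uses essentially the same idea as the paper's: both arguments hinge on Lemma~\ref{fiscontl2} to show that the functional $\phi(g)=\tfrac{1}{2\pi}\int_0^{2\pi}|\langle g(\eiu),\eta(\eiu)\rangle_E|\,d\theta$ is continuous, and then identify $V$ as $\phi^{-1}(\{0\})$. The only differences are organizational: the paper treats (i) first via a sequential argument and declares (ii) similar, whereas you prove (ii) first via the preimage formulation and then obtain (i) as the intersection with the closed subspace $H^2(\Dd,E)\subset L^2(\Tt,E)$ using Remark~\ref{H2subsetL2}; your ordering is arguably cleaner since it avoids repeating the argument.
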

\begin{proof}
\noindent\text{(i).}		$V$ is a linear subspace of $H^2(\Dd, E)$ since for $\lambda, \mu \in \Cc,$ $\psi , k \in V$ and for almost all $z\in \Tt,$ 
	
	$$\langle \lambda \psi(z) + \mu k(z) , \eta (z) \rangle_{E} = \lambda \langle \psi(z), \eta (z) \rangle_{E} + \mu \langle k(z) , \eta (z) \rangle_{E} =0,  $$
	hence  $\lambda \psi + \mu k \in V.$
	
	\noindent Now suppose that the sequence of functions $(g_n)_{n=1}^\infty$ in $V$ converges to a function $g.$ We need to show that $g \in V.$ 
	Since $g_n \in V$ for all $n\in \mathbb{N},$ we have 
\begin{equation}\label{gn=0}
\langle g_n(z), \eta(z)\rangle_{E}=0 \;\text{for almost all}\; z\in \Tt. 
\end{equation}
	
	\noindent Consider the function 
	$\phi \colon H^2(\Dd,E)\to \Cc$ given by 
	$$ \phi(f) =  \displaystyle\frac{1}{2\pi} \int\limits_0^{2\pi} |\langle f (e^{i\theta}) , \eta (e^{i\theta}) \rangle_{E} |\; d\theta.$$
		\noindent Then, by equation (\ref{gn=0}), we have  $$\phi(g_n)= \displaystyle\frac{1}{2\pi} \int\limits_0^{2\pi} |\langle g_n (e^{i\theta}) , \eta (e^{i\theta}) \rangle_{E} |\; d\theta=0.$$
By Remark \ref{H2subsetL2} and Lemma \ref{fiscontl2}, $\phi$ is a continuous function on $H^2(\Dd,E),$ thus 
$$\lim\limits_{n \to \infty} \phi(g_n) = \phi(g),$$
and so 
$$ \displaystyle\frac{1}{2\pi} \int\limits_0^{2\pi} |\langle g (e^{i\theta}) , \eta (e^{i\theta}) \rangle_{E} |\; d\theta
= \lim\limits_{n \to \infty}  \displaystyle\frac{1}{2\pi} \int\limits_0^{2\pi} |\langle g_n (e^{i\theta}) , \eta (e^{i\theta}) \rangle_{E} | \; d\theta =0.$$ 
	Thus $|\langle g (e^{i\theta}) , \eta (e^{i\theta}) \rangle_{E} | =0$ for almost all $\eiu \in \Tt,$ and, hence, $g \in V.$ 
	We have proved that $V$ is a closed subspace of $H^2(\Dd, E).$  

\noindent{\text{(ii).}} The proof is similar to \text{(i)}. \end{proof}	

Our motivation for the next theorem is the following. 
In \cite{YCL2020} spaces of the form 
$$ \xi_0 \telwe \dots \telwe \xi_j \telwe H^2(\Dd,E)$$
play a crucial role in the superoptimal Nehari problem. They are the domains of Hankel-type operators whose norms are ``superoptimal singular values" of error functions $G-Q$, where $G$ is a given continuous approximand and $Q$ is its analytic approximation.

\begin{theorem}\label{xjclosed}
Let $E$ be a separable Hilbert space, let  $\xi_0, \xi_1, \cdots, \xi_j \in H^\infty(\Dd, E)$. Suppose that the set $\{\xi_i(z)\}_{i=0}^{j}$ is  orthonormal in $E$ for almost every $z \in \Tt.$ Then 
 $$ \xi_0 \telwe \dots \telwe \xi_j \telwe H^2(\Dd,E)$$
		is a closed subspace of $H^2(\Dd,\we^{j+2}E).$
	\end{theorem}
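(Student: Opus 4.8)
The plan is to identify $\xi_0 \telwe \dots \telwe \xi_j \telwe H^2(\Dd,E)$ with the image of the pointwise creation-type operator $T\colon H^2(\Dd,E) \to H^2(\Dd,\we^{j+2}E)$ given by $Tf = \xi_0 \telwe \dots \telwe \xi_j \telwe f$, and then to show that $T$ is bounded below modulo its kernel, i.e.\ that the induced injective operator on $H^2(\Dd,E) \ominus \ker T$ is bounded below; a bounded-below operator between Hilbert spaces has closed range, which gives the conclusion. First I would check that $T$ is a well-defined bounded operator into $H^2(\Dd,\we^{j+2}E)$: by iterating Proposition \ref{xweyh2} (since each $\xi_i \in H^\infty$, wedging with $\xi_i$ maps $H^2(\Dd,\we^kE)$ boundedly into $H^2(\Dd,\we^{k+1}E)$), we get that $Tf$ is analytic and lies in $H^2$, with a norm bound in terms of $\prod\|\xi_i\|_\infty$. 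The boundary-value identifications of Remark \ref{H2subsetL2} and Lemma \ref{xitelweh21} let us compute norms on $\Tt$.

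The crux is the norm computation on the circle. Using the pointwise orthonormality of $\{\xi_i(z)\}$ for a.e.\ $z\in\Tt$ together with Lemma \ref{weon} (the Gram-determinant identity for a unit-tensor wedged with an arbitrary vector), I would show that for a.e.\ $z \in \Tt$,
\[
\|(\xi_0 \we \dots \we \xi_j \we f)(z)\|_{\we^{j+2}E}^2 = \|f(z)\|_E^2 - \sum_{i=0}^j |\langle f(z), \xi_i(z)\rangle_E|^2 = \| P_{N(z)} f(z)\|_E^2,
\]
where $N(z) = \{\xi_0(z),\dots,\xi_j(z)\}^\perp$ in $E$. Integrating over $\Tt$ and invoking Remark \ref{H2subsetL2} gives $\|Tf\|_{H^2}^2 = \|f\|_{L^2(\Tt,E)}^2 - \sum_i \int_\Tt |\langle f, \xi_i\rangle|^2$. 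In particular $\ker T$ consists exactly of those $f \in H^2(\Dd,E)$ with $f(z) \in \spn\{\xi_0(z),\dots,\xi_j(z)\}$ for a.e.\ $z$, and on the orthogonal complement $\ker T^\perp$ we must show that $\|Tf\|_{H^2} \ge c\|f\|_{H^2}$ for some $c>0$.

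The main obstacle is precisely establishing this lower bound, because the pointwise projection $P_{N(z)}$ can be highly degenerate on a large portion of the circle without $f$ itself being small, so a naive pointwise argument fails; the analyticity of $f$ and of the $\xi_i$ must be used. The strategy I would pursue is: let $\Pi \colon H^2(\Dd,E) \to H^2(\Dd,E)$ be the operator $\Pi f = P_+( (\xi_0\xi_0^* + \dots + \xi_j\xi_j^*) f)$ — a finite sum of operators of the type analysed in Theorem \ref{cx*cx} (Toeplitz operators with symbols $\xi_i\xi_i^*$), so that $\langle Tf, Tf\rangle = \langle (I - \Pi)f, f\rangle$ and hence $T^*T = I - \Pi$. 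Since each $\xi_i\xi_i^*(z)$ is a rank-one orthogonal projection and the $\xi_i(z)$ are orthonormal, $(\sum_i \xi_i\xi_i^*)(z)$ is an orthogonal projection of rank $j+1$ for a.e.\ $z$; applying Theorem \ref{cx*cx} iteratively (or directly) shows $\Pi$ is itself a positive contraction, in fact $\Pi = \sum_{i=0}^{j} C_{\xi_i}^*C_{\xi_i}$ up to the identifications, so $T^*T = I - \Pi \ge 0$. To get $T^*T$ bounded below I would show that $I-\Pi$ has trivial kernel \emph{and closed range} by relating $\ker(I-\Pi) = \ker T$ to an invariant-subspace description: the functions $\xi_0,\dots,\xi_j$ generate, under multiplication structure on $H^2$, a "model-type" subspace, and the complementary projection $I - \Pi$ restricted to $\ker T^\perp$ is the compression which, by a Beurling-type / Halmos wandering-subspace argument applied to the shift-invariant structure of $\ker T$, is bounded below. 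An alternative and cleaner route, which I would try first, is to note $\ker T = \xi_0 H^2 \cap \dots$ type intersections are themselves shift-invariant and to appeal to the Lax--Halmos theorem: one shows $\ker T$ is a closed shift-invariant subspace, writes $H^2(\Dd,E) = \ker T \oplus M$, and proves $T|_M$ is an isometry up to a fixed constant by a direct computation using that the $\xi_i$ extend to an inner operator-valued function $\Xi$ whose range bundle is $\spn\{\xi_i(z)\}$. I expect verifying the boundedness-below (equivalently, closedness of the range of $I-\Pi$) to be the genuinely hard step; everything else is bookkeeping with the results already proved.
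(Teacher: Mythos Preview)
Your proposal has a genuine gap. You correctly derive the key pointwise norm identity
\[
\|(Tf)(z)\|_{\we^{j+2}E}^2 = \|f(z)\|_E^2 - \sum_{i=0}^j |\langle f(z),\xi_i(z)\rangle_E|^2
\]
from Lemma \ref{weon}, but you then set yourself the task of proving that $T$ is bounded below on the $H^2$-orthogonal complement $(\ker T)^\perp$, and the strategies you sketch for this --- analysing the Toeplitz operator $I-\Pi$, invoking Beurling or Lax--Halmos structure, extending the $\xi_i$ to an operator-valued inner function --- are left vague and incomplete. You yourself flag this as ``the genuinely hard step'' without resolving it. The space $(\ker T)^\perp$ is hard to describe concretely, and nothing in your outline produces an explicit lower bound.

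The paper's proof avoids this difficulty by a simple change of subspace. Rather than $(\ker T)^\perp$, it works with the \emph{pointwise} orthogonal complement
\[
\Xi_j = \Poc(\{\xi_0,\dots,\xi_j\}, H^2(\Dd,E)) = \{f\in H^2(\Dd,E): \langle f(z),\xi_i(z)\rangle_E=0 \text{ a.e.\ on }\Tt,\ i=0,\dots,j\},
\]
which is closed by Proposition \ref{vclosed}. Your own norm identity shows immediately that $T|_{\Xi_j}$ is an \emph{exact isometry} (the sum of inner products vanishes), so $T(\Xi_j)$ is closed with no lower-bound argument needed at all. The paper then argues that $T(H^2(\Dd,E)) = T(\Xi_j)$: for any $\psi\in H^2(\Dd,E)$ and almost every $z\in\Tt$, each term $\langle \psi(z),\xi_i(z)\rangle_E\,\xi_i(z)$ wedges to zero with $\xi_0(z)\we\cdots\we\xi_j(z)$, so $(T\psi)(z)$ depends only on the component of $\psi(z)$ orthogonal to $\spn\{\xi_i(z)\}$. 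The insight you are missing is precisely this replacement of $(\ker T)^\perp$ by $\Xi_j$: these are genuinely different subspaces (in general $\Xi_j\subsetneq(\ker T)^\perp$), but $\Xi_j$ is the one on which your norm formula already gives an isometry rather than merely a hoped-for lower bound.
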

\begin{proof}
By Proposition \ref{wejanalytic}, for every $x \in H^2(\Dd,E),$ 
$$ \xi_0 \telwe \xi_1 \telwe\cdots \telwe \xi_j \telwe x$$ is analytic on $\Dd.$ By Proposition \ref{wel2conv}, since $\xi_0,\xi_1,\dots,\xi_j$ are pointwise orthogonal on $\Tt,$
	$$\| \xi_0 \telwe \xi_1 \telwe \cdots \xi_j \telwe x\|_{L^2(\Tt,\we^{j+2}E)} < \infty. $$ Thus, for every $x \in H^2(\Dd,E),$
	$$\xi_0 \telwe \xi_1 \telwe\cdots \telwe \xi_j \telwe x \in H^2(\Dd,\we^{j+2}E). $$	

 Let us first show that $\xi_0 \telwe H^2(\Dd,E)$ is a closed subspace of $H^2(\Dd,\we^2E).$ Observe that, by Proposition \ref{xitelweh21}, $\xi_0 \telwe H^2(\Dd,E)\subset H^2(\Dd,\we^2E).$ 
Let 
$$\Xi_0 =\{ f \in H^2(\Dd,E): \langle f(z), \xi_0 (z) \rangle_{E}=0\quad \text{almost everywhere on} \; \Tt \}.$$ 
Consider a vector-valued function $w\in H^2(\Dd,E).$ 
For almost every  $z\in \T,$ we may write $w$ as 
	$$w(z)= w(z) -\langle w(z),\xi_0(z)\rangle_{E}\xi_0(z)+\langle w(z),\xi_0(z)\rangle_{E}\xi_0(z).$$ 
Then, for all $w\in H^2(\Dd,E)$ and for  almost every $z\in \T,$	
	$$\begin{array}{clllll}(\xi_0 \telwe w)(z)& = \xi_0(z)\we \big(w(z) -\langle w(z),\xi_0(z)\rangle_{E}\xi_0(z)+\langle w(z),\xi_0(z)\rangle_{E}\xi_0(z)\big)\vspace{2ex}\\&=\xi_0(z)\we \big(w(z) -\langle w(z),\xi_0(z)\rangle_{E}\xi_0(z) \big)  \end{array}$$
due to the pointwise linear dependence of $\xi_0$ and 
$z \mapsto \langle w(z),\xi_0(z) \rangle_E \xi_0(z) $ almost everywhere on $\T.$ 
Note that 
$$w(z)- \langle w(z),\xi_0(z)\rangle_{E}\xi_0(z)\in \Xi_0,$$ 
thus 
	$$\xi_0 \telwe H^2(\Dd,E) \subset \xi_0 \telwe \Xi_0. $$ 
By Proposition \ref{vclosed}, $\Xi_0$ is a closed subspace of $H^2(\Dd,E),$ hence
	$$ \xi_0 \telwe H^2(\Dd,E) \supset \xi_0 \telwe \Xi_0,$$
and so,
	$$\xi_0 \telwe H^2(\Dd,E) = \xi_0 \telwe \Xi_0 .$$
Consider the mapping 
	$$C_{\xi_0}\colon \Xi_0 \to \xi_0 \telwe \Xi_0 $$
given by
	$$C_{\xi_0} w = \xi_0 \telwe w $$
for all $w\in \Xi_0.$  Notice that, by  assumption,
$\|\xi_0(e^{i\theta})\|_{E}^2=1$ for almost every $\eiu \in  \Tt.$ Therefore, for any $w \in \Xi_0,$ we have 
	$$\begin{array}{cllllll}
	\|\xi_0 \telwe w\|_{L^2(\Tt,\we^2E)}^2 &=\displaystyle \frac{1}{2\pi} \int\limits_0^{2\pi} \langle \xi_0 \telwe w, \xi_0 \telwe w \rangle (e^{i\theta})d\theta \vspace{3ex} \\
	&=  \displaystyle \frac{1}{2\pi} \int\limits_0^{2\pi}\left( \|\xi_0(e^{i\theta})\|_{E}^2 \|w(e^{i\theta})\|_{E}^2 - |\langle w(e^{i\theta}), \xi_0(e^{i\theta})\rangle|^2\right)\; d\theta \vspace{3ex}\\
	&= \|w\|_{L^2(\Tt, E)}^2, 
	\end{array}$$
since
	$w$ is pointwise orthogonal to $\xi_0$ almost everywhere on $\Tt.$  Thus the mapping 
$$C_{\xi_0}\colon  \Xi_0 \to \xi_0 \telwe \Xi_0$$ 
is an isometry. Furthermore, 
$C_{\xi_0}\colon  \Xi_0 \to \xi_0 \telwe \Xi_0$ 
is a surjective mapping, 
	thus $\Xi_0$ and $\xi_0 \telwe \Xi_0$ are isometrically isomorphic. Therefore, since $\Xi_0$ is a closed subspace of $H^2 (\Dd, E),$ the space $\xi_0 \telwe \Xi_0$ is a closed subspace of $H^2(\Dd,\we^2E)$. Hence $\xi_0 \telwe H^2(\Dd,E)$ is a closed subspace of $H^2(\Dd,\we^2E).$
	
	To prove that 
	$\xi_0 \telwe \dots \telwe \xi_j \telwe H^2(\Dd,E) $ is a closed subspace of $H^2(\Dd,\we^{j+2}E)$, let us consider
	$$\Xi_j =\{f \in H^2(\Dd,E):\langle f(z), \xi_i (z)\rangle_{E} =0, \; \text{for}\; i=0,\cdots,j \} $$to be the pointwise orthogonal complement of $\xi_0, \dots,\xi_j$ in $H^2(\Dd,E).$ Let $\psi \in H^2(\Dd,E).$ We may write $\psi$ as 
	$$\psi(z) = \psi(z) - \sum\limits_{i=0}^j \langle \psi(z), \xi_i(z)\rangle_{E}\xi_i(z) +\sum\limits_{i=0}^j \langle \psi(z), \xi_i(z)\rangle_{E}\xi_i(z).  $$Then, for all $\psi \in H^2(\Dd,E)$ and for almost all $z \in \Tt,$ 
	
	$$(\xi_0 \telwe \cdots \telwe \xi_j \telwe \psi)(z) = \xi_0(z)\we\cdots\we \left( \psi(z)- \sum\limits_{i=0}^j \langle \psi(z), \xi_i(z)\rangle_{E}\xi_i(z)  \right)$$
due to the pointwise linear dependence of $\xi_k$ and 
$z \mapsto \langle \psi, \xi_k\rangle_E \xi_k$ almost everywhere on $\T.$
	
	\noindent Notice that  $\left( \psi(z)- \sum\limits_{i=0}^j \langle \psi(z), \xi_i(z)\rangle_{E}\xi_i(z)  \right)$ is in $\Xi_j,$ thus
	$$\xi_0 \telwe \cdots \telwe \xi_j \telwe H^2(\Dd,E) \subset \xi_0 \telwe \cdots \telwe \xi_j \telwe \Xi_j. $$
The reverse inclusion holds by the definition of $\Xi_j,$ hence 
	
	$$\xi_0 \telwe \cdots \telwe \xi_j \telwe H^2(\Dd,E) = \xi_0 \telwe \cdots \telwe \xi_j \telwe \Xi_j.$$
	
	Consequently, in order to prove the proposition it suffices to show that $\xi_0 \telwe \cdots \telwe \xi_j \telwe \Xi_j $ is a closed subspace of $H^2(\Dd, \we^{j+2} E).$
	By Proposition \ref{vclosed}, $\Xi_j$ is a closed subspace of $H^2(\Dd, E),$ being a finite intersection of closed subspaces. 

By Lemma \ref{weon}, for any $f \in \Xi_j,$ we get  	

$$ \begin{array}{clllll}
	\|\xi_0 \telwe \xi_1 \telwe\cdots\telwe \xi_j \telwe f\|_{L^2 ( \Tt, \we^{j+2} E)}^2 \vspace{2ex} &=\displaystyle\frac{1}{2\pi} 
	\int_0^{2\pi} \|\xi_0(e^{i\theta}) \we \xi_1(e^{i\theta}) \we\cdots\we \xi_j(e^{i\theta}) \we f(e^{i\theta})\|_{\we^{j+2} E}^2 d\theta \\
&=\displaystyle\frac{1}{2\pi} 
	\int_0^{2\pi} \|f(e^{i\theta}) - \displaystyle\sum\limits_{i=1}^j \langle f(e^{i\theta}) , \xi_i(e^{i\theta}) \rangle \xi_i(e^{i\theta})\|_{E}^2 d\theta \vspace{2ex}\\
&= \displaystyle\frac{1}{2\pi} \int_0^{2\pi} \|f(e^{i\theta})\|_{E}^2 d\theta = \|f\|_{L^2 ( \Tt, E)}^2.\end{array}$$

	Thus $$(\xi_0 \telwe \xi_1 \telwe\cdots\telwe \xi_j \telwe \cdot) \colon \Xi_j \to  \xi_0 \telwe \xi_1 \telwe\cdots\telwe \xi_j \telwe \Xi_j $$ 
	is an isometry. 
	Furthermore  
$$(\xi_0 \telwe \xi_1 \telwe\cdots\telwe \xi_j \telwe \cdot)\colon \Xi_j \to \xi_0 \telwe \xi_1 \telwe\cdots\telwe \xi_j \telwe \Xi_j $$ 
is a surjective mapping, thus $\Xi_j$ and $\xi_0 \telwe \cdots \telwe \xi_j \telwe \Xi_j$ are isometrically isomorphic. Therefore, since $\Xi_j$ is a closed subspace of $H^2(\Dd,E),$ the space $\xi_0 \telwe \cdots \telwe \xi_j \telwe \Xi_j $ is a closed subspace of $H^2(\Dd,\we^{j+2}E).$ Hence 
	$$\xi_0 \telwe \cdots \telwe \xi_j \telwe H^2(\Dd,E) $$ is a closed subspace of $H^2(\Dd,\we^{j+2}E).$
\end{proof}

\begin{proposition}\label{poclosed} Let $E$ be a separable Hilbert space,
	let $F$ be a subspace of $L^2(\Tt,E)$ and let $X$ be a subset of $L^2(\Tt,E).$	The space 
$$\Poc(X,F)= \{ f \in F \; : \; f(z) \perp \{ x (z) : x \in X\} \; \text{for almost all}\; z \in \Tt \}$$ is 
a closed subspace of $F.$
\end{proposition}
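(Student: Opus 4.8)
The plan is to reduce the statement to Proposition~\ref{vclosed}(ii), which already handles the case of a single functional, together with the elementary fact that an intersection of closed subspaces is closed. First I would observe that $\Poc(X,F)$ is a linear subspace: if $f,g$ vanish against every $x(z)$ for a.e.\ $z$ (each off a null set depending on $x$), then for fixed $x$ the set where $\lambda f(z)+\mu g(z)$ fails to be orthogonal to $x(z)$ is contained in the union of two null sets, hence null, so $\lambda f+\mu g\in\Poc(X,F)$.

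Next I would write
\[
\Poc(X,F)= F \cap \bigcap_{x \in X} V_x, \qquad
V_x = \{ g \in L^2(\Tt,E) : \langle g(z), x(z)\rangle_E = 0 \text{ for a.e. } z \in \Tt\}.
\]
By Proposition~\ref{vclosed}(ii) each $V_x$ is a closed subspace of $L^2(\Tt,E)$. An arbitrary intersection of closed subspaces of a Banach space is closed (the intersection of closed sets is closed, and the intersection of subspaces is a subspace), so $\bigcap_{x\in X} V_x$ is a closed subspace of $L^2(\Tt,E)$. Intersecting with $F$, we see that $\Poc(X,F)$ is closed \emph{in the subspace topology on $F$}, which is exactly the assertion of the proposition.

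One small point to be careful about: Proposition~\ref{vclosed} is stated for $\eta \in L^2(\Tt,E)$ (or $L^2(\Dd,E)$), and the elements $x \in X$ here are arbitrary elements of $L^2(\Tt,E)$, so the hypothesis of that proposition is met directly with $\eta = x$. No boundedness or analyticity of the $x$'s is needed, since we only ever use the pointwise inner product a.e.\ on $\Tt$. I do not anticipate any serious obstacle here; the only thing requiring a word of justification is the measurability/null-set bookkeeping in the linearity argument and the fact that we do not claim $F$ itself is closed in $L^2(\Tt,E)$ — we only claim $\Poc(X,F)$ is relatively closed in $F$, which follows because $\Poc(X,F) = F \cap C$ for the closed set $C = \bigcap_{x\in X} V_x$. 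If one additionally knows $F$ is closed in $L^2(\Tt,E)$, then of course $\Poc(X,F)$ is closed in $L^2(\Tt,E)$ as well.
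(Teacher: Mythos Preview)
Your proposal is correct and follows essentially the same approach as the paper: the paper's proof is a one-liner observing that $\Poc(X,F)$ is the intersection over $x\in X$ of the closed subspaces $V_x$ furnished by Proposition~\ref{vclosed}. Your version adds welcome detail (the linearity check, the explicit $F\cap\bigcap_x V_x$ decomposition, and the remark that closedness is meant relative to $F$), but the underlying idea is identical.
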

\begin{proof} It follows from Proposition \ref{vclosed}, since $\Poc(X,F)$ is
an intersection of closed subspaces 
$V_x = \{ f \in F: \langle f(z), x(z) \rangle_E =0 \;\; \text{for almost all } \;\; z \in \T \}$ over $x \in F$.
 \end{proof}	
\begin{remark}
	By the generalized Fatou's Theorem, for $E$ a separable Hilbert space, the space $H^\infty(\Dd,E)$ can be identified with a closed subspace of $L^\infty(\Tt,E).$
\end{remark}
\begin{definition}
	Let $E$ be a separable Hilbert space. Let $f \in H^p (\Dd,E),$ for $1 \leq p \leq \infty.$ 
	By the generalized Fatou's Theorem (see \cite{NagyFoias}, p. 186), the radial limit 	
	$$\lim_{r \to 1}f(r\eiu)\underset{\|\cdot\|_E}{=}\tilde{f}(\eiu)
\;\; (0<r<1)$$ 
exists almost everywhere on $\Tt$ and defines a function $\tilde{f} \in L^p(\Tt,E).$ The set of points on $\Tt$ at which the above limit does not exist, will be called the \emph{singular set} of the function $f$ and will be denoted by $N_f.$
\end{definition}\raggedbottom Note that the singular sets of functions in $H^p(\Dd,E)$ for $1 \leq p \leq \infty$ are null sets with respect to Lebesgue measure.

\begin{definition}\label{pls} Let $E$ be a separable Hilbert space.
Let $F$ be a subspace of $L^2(\Tt, E)$ and let $X$ be a subset of $L^2(\Tt,E).$ We define the \emph{pointwise linear span} of $X$ in $F$ to be the set 
$$\Pls(X,F)= \{f \in F: f(z)\in \spn\{x(z): x \in X\}\;\text{for almost all} \;z\in \Tt\}. $$ 
\end{definition}

Recall Definition \ref{pwcre}, for a separable Hilbert space $E$ and for $\xi \in H^\infty(\Dd,E)$, the pointwise creation operator  $C_\xi$ is defined by
$$C_\xi\colon H^2(\Dd,E)\to H^2(\Dd,\we^2E),\ f\mapsto \xi\telwe f,$$ 
where 
\[
(\xi \telwe f)(z)=\xi(z)\we f(z) \mbox{ for all }z \in \Dd.
\]

\begin{proposition}\label{kercxi} Let $E$ be a separable Hilbert space.
For $\xi \in H^\infty(\Dd, E),$ 
$$\ker C_\xi \subset  \Pls(\{\xi\}, H^2(\Dd,E)).$$
\end{proposition}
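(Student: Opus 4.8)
The plan is to take an arbitrary $f \in \ker C_\xi$ and show that $f(z) \in \spn\{\xi(z)\}$ for almost every $z \in \Tt$, which is exactly the statement that $f \in \Pls(\{\xi\}, H^2(\Dd,E))$. The starting point is that $C_\xi f = \xi \telwe f = 0$ as an element of $H^2(\Dd, \we^2 E)$. By Lemma \ref{xitelweh21} the boundary function of $\xi \telwe f$ agrees almost everywhere with $\tilde{\xi} \telwe \tilde{f}$, and since $f$ is identified with its boundary values (Remark \ref{H2subsetL2}), the hypothesis $C_\xi f = 0$ forces $\xi(z) \we f(z) = 0$ in $\we^2 E$ for almost every $z \in \Tt$.

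The key step is then the pointwise linear-algebra fact: for $a, b \in E$, $a \we b = 0$ in $\we^2 E$ if and only if $a$ and $b$ are linearly dependent. One direction of this is immediate from Proposition \ref{we}: $\|a \we b\|_{\we^2 E}^2 = \langle a, a\rangle \langle b, b\rangle - |\langle a, b\rangle|^2$, which is the Gram determinant of $\{a, b\}$, and this vanishes precisely when the Gram matrix is singular, i.e.\ when $a, b$ are linearly dependent. Applying this with $a = \xi(z)$, $b = f(z)$ at each $z$ outside the relevant null set, we conclude that $\xi(z)$ and $f(z)$ are linearly dependent in $E$ for almost every $z \in \Tt$.

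Finally I would convert ``linearly dependent'' into the membership statement in the definition of $\Pls$. If $\xi(z) \ne 0$, linear dependence of $\{\xi(z), f(z)\}$ means $f(z) \in \spn\{\xi(z)\}$ directly. If $\xi(z) = 0$ then $\spn\{\xi(z)\} = \{0\}$, so I must check that $f(z) = 0$ there as well; but $\xi \in H^\infty(\Dd, E)$ is a nonzero analytic function (the case $\xi \equiv 0$ being degenerate and handled separately, with $C_\xi = 0$ and $\Pls(\{0\}, H^2) = \{0\}$ forcing the claim to read $\ker C_\xi \subset \{0\}$, which actually fails unless we interpret the statement vacuously — so one should note that the interesting case is $\xi \not\equiv 0$), hence $\xi(z) \ne 0$ for almost every $z \in \Tt$ by the uniqueness theorem for boundary values of nonzero $H^\infty$ functions, and the exceptional set is absorbed into the null set already discarded. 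Thus $f(z) \in \spn\{\xi(z)\}$ almost everywhere, giving $f \in \Pls(\{\xi\}, H^2(\Dd,E))$.

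I expect the only subtlety — hardly an obstacle — to be the bookkeeping of null sets: the singular sets $N_\xi$ and $N_f$ where radial limits fail, the null set where $\xi(z) \we f(z) \ne 0$, and (if needed) the null set where $\xi(z) = 0$, must all be combined into a single null set off which the argument runs cleanly. The algebraic core is just the Gram-determinant computation already packaged in Proposition \ref{we}.
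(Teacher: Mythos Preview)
Your argument is correct and follows essentially the same route as the paper: both reduce to the pointwise fact that $\xi(z)\we f(z)=0$ forces linear dependence of $\xi(z)$ and $f(z)$, via the Gram-determinant formula of Proposition~\ref{we}. The paper is terser---it works on $\Dd$ (where $C_\xi f=0$ gives $\xi(z)\we f(z)=0$ for \emph{all} $z\in\Dd$) and leaves the passage to the boundary and the $\xi(z)=0$ edge case implicit---whereas you pass to $\Tt$ first and handle those points explicitly; your observation that the statement is vacuous or false for $\xi\equiv 0$ is a genuine subtlety the paper does not address.
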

\begin{proof}
We have 
\[ \begin{array}{clll} \ker C_\xi &= \{f \in H^2(\Dd,E): (\xi \telwe f)(z)=0\; \text{for all}\; z \in \Dd \}\\
&= \{ f \in H^2(\Dd,E): \xi(z) \we f(z)=0 \; \text{for all}\; z \in \Dd  \} \\
&= \{f \in H^2(\Dd,E) :\xi(z), f(z) \;\text{are pointwise linearly dependent for all}\;z \in \Dd  \} \\
&\subset   \Pls(\{\xi\}, H^2(\Dd,E)).
\end{array} 
\]\end{proof}

\begin{example} {\rm
Let $E=\Cc^2.$ We can find functions $f,g \in H^2(\Dd,E)$ such that \linebreak$f\in \Poc(\{g\}, H^2(\Dd,E))$ but it is false that $\langle f(z), g(z)\rangle_E =0 $ for all $z \in \Dd.$ 

\noindent Choose 
$$g(z) = \begin{pmatrix}
z \\ z^2 
\end{pmatrix}, \quad f(z)= \begin{pmatrix}
f_1(z) \\ f_2(z)
\end{pmatrix} \quad \text{for}\quad z\in \Dd.$$ 

\noindent	Then $$f \in \Poc(\{g\},H^2(\Dd,E))$$ is equivalent to $$\langle {f}(z), {g}(z) \rangle_E =0 \quad \text{for almost all} \quad z\in \Tt.$$ The later is equivalent to 
$$ \left\langle \begin{pmatrix}  z \\ z^2 \end{pmatrix} , \begin{pmatrix}
{f_1}(z) \\ {f_2}(z)
\end{pmatrix}  \right\rangle_E=0  \quad \text{for almost all} \quad z\in \Tt,$$ which holds if and only if $$ \bar{z} {f_1}(z) + \bar{z}^2 {f_2}(z) = 0 \quad \text{for almost all} \quad z \in \Tt.$$

\noindent Equivalently $$ {f_1}(z) = - \bar{z} {f_2}(z) \quad \text{for almost all} \quad z\in \Tt, $$ which in turn is equivalent to $$ f(z) = \begin{pmatrix} 
{f_1}(z) \\ -z {f_1}(z).
\end{pmatrix} \quad \text{for almost all} \quad z \in \Tt. $$ 

\noindent Now, for $z\in \Dd,$ 
$$\begin{array}{clll} 
\langle f(z), g(z) \rangle_E &= \left\langle \begin{pmatrix}
f_1 (z) \\ -zf_1(z) 
\end{pmatrix}, \begin{pmatrix}
z \\ z^2 
\end{pmatrix} \right\rangle_E \vspace{2ex} \\ 
&= \bar{z} f_1(z) - \bar{z} |z|^2 f_1(z)\vspace{2ex} \\ 
&= \bar{z} ( 1- |z|^2)f_1 (z).
\end{array} $$  So if we take $f_1(z) =1, f(z) = \begin{pmatrix}
1 \\ -z 
\end{pmatrix} $ 
for  $z\in \Dd,$ then $f \in \Poc(\{g\},H^2(\Dd,E))$, but $\langle f(z), g(z) \rangle_E \neq 0$ for all $z \in \Dd\setminus\{0\}.$

Thus it is not true  in general that $\Poc(\{g\},H^2(\Dd,E)) \subset \{g\}^{\perp}$.
}
\end{example}

\begin{lemma}\label{pocker} Let $E$ be a separable Hilbert space.
For $\xi \in H^\infty(\Dd, E), $
$$\Poc(\{\xi\},H^2(\Dd,E)) \subset H^2(\Dd,E) \ominus \Pls(\{\xi\}, H^2(\Dd,E))  .$$
\end{lemma}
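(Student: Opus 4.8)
The plan is to show that any $f \in \Poc(\{\xi\}, H^2(\Dd,E))$ is orthogonal, in the Hilbert-space inner product of $H^2(\Dd,E)$, to every $g \in \Pls(\{\xi\}, H^2(\Dd,E))$. Recall from Remark \ref{H2subsetL2} that $H^2(\Dd,E)$ is identified isometrically with a closed subspace of $L^2(\Tt,E)$, so the inner product of $f$ and $g$ may be computed as $\frac{1}{2\pi}\int_0^{2\pi} \langle f(\eiu), g(\eiu)\rangle_E\, d\theta$ using boundary values. The idea is that for such a $g$, the boundary value $g(\eiu)$ lies in $\spn\{\xi(\eiu)\}$ for almost every $\eiu \in \Tt$, so $g(\eiu) = c(\eiu)\,\xi(\eiu)$ for some scalar function $c$ defined a.e.\ on $\Tt$; then $\langle f(\eiu), g(\eiu)\rangle_E = \overline{c(\eiu)}\,\langle f(\eiu), \xi(\eiu)\rangle_E = 0$ a.e., because $f \in \Poc(\{\xi\}, H^2(\Dd,E))$ means precisely $f(\eiu) \perp \xi(\eiu)$ a.e.\ on $\Tt$. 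Integrating gives $\langle f, g\rangle_{H^2} = 0$, which is the desired containment in $H^2(\Dd,E) \ominus \Pls(\{\xi\}, H^2(\Dd,E))$.

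First I would fix $f \in \Poc(\{\xi\}, H^2(\Dd,E))$ and $g \in \Pls(\{\xi\}, H^2(\Dd,E))$, and pass to their radial-limit boundary functions in $L^2(\Tt,E)$, which exist a.e.\ by the generalized Fatou theorem. Then I would record that, by definition of $\Pls$, there is a null set $N$ such that for every $\eiu \notin N$ the vector $g(\eiu)$ is a scalar multiple of $\xi(\eiu)$. I would then observe that, on the set where $\xi(\eiu) \neq 0$, the scalar multiplier is uniquely determined, so $g(\eiu) = c(\eiu)\xi(\eiu)$ with $c$ measurable; on the (possibly nonempty) set where $\xi(\eiu) = 0$ one simply has $g(\eiu) = 0$, and in either case $\langle f(\eiu), g(\eiu)\rangle_E$ is a scalar multiple of $\langle f(\eiu), \xi(\eiu)\rangle_E$, hence vanishes a.e.\ by the defining property of $\Poc$. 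Finally, integrating $\langle f(\eiu), g(\eiu)\rangle_E = 0$ over $\Tt$ yields $\langle f, g\rangle_{L^2(\Tt,E)} = 0$, and by the isometric identification of Remark \ref{H2subsetL2} this is $\langle f, g\rangle_{H^2(\Dd,E)} = 0$.

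Since $g$ was arbitrary in $\Pls(\{\xi\}, H^2(\Dd,E))$ and $\Pls(\{\xi\}, H^2(\Dd,E))$ is a closed subspace of $H^2(\Dd,E)$ (which follows from Proposition \ref{vclosed} applied to the complement, or can be verified directly by a continuity argument analogous to the proof of Proposition \ref{vclosed}), we conclude that $f$ lies in the orthogonal complement $H^2(\Dd,E) \ominus \Pls(\{\xi\}, H^2(\Dd,E))$, as claimed.

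The main obstacle I anticipate is purely a measurability bookkeeping issue: extracting the scalar coefficient function $c$ with $g(\eiu) = c(\eiu)\xi(\eiu)$ in a measurable way, and handling the set $\{\eiu : \xi(\eiu) = 0\}$ cleanly. This is in fact avoidable — one does not need $c$ to be measurable or even well-defined, only the \emph{pointwise} algebraic fact that $\langle f(\eiu), g(\eiu)\rangle_E = 0$ whenever $g(\eiu) \in \spn\{\xi(\eiu)\}$ and $f(\eiu) \perp \xi(\eiu)$, which holds trivially for each individual $\eiu$ outside a null set. So the proof reduces to: pointwise algebra a.e., then integrate, then invoke Remark \ref{H2subsetL2}; there is no genuine analytic difficulty.
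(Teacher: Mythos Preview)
Your proof is correct and follows essentially the same route as the paper's: both argue that pointwise orthogonality of $f$ to $\xi$ forces $\langle f(z), g(z)\rangle_E = 0$ a.e.\ for every $g \in \Pls(\{\xi\}, H^2(\Dd,E))$, and then pass to the $H^2$ inner product via the boundary-value identification of Remark \ref{H2subsetL2}. Your version is more explicit about the integration step and the (ultimately unnecessary) scalar-coefficient extraction, whereas the paper records the pointwise implication in a single line; note also that the closedness of $\Pls$ you mention is not needed, since $H^2 \ominus S$ is well defined for any subset $S$.
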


\begin{proof}
Let  $f \in \Poc(\{\xi\},H^2(\Dd,E)).$ This is equivalent to $f \in H^2(\Dd,E)$ and 
$${f}(z)\perp {\xi}(z) \;\text{for all}\; z \in \Tt \setminus (N_f \cup N_\xi) ,$$
 where $N_f, N_\xi$ are the singular sets for the functions $f, \xi$ respectively. This in turn is equivalent to $f \in H^2(\Dd,E)$ and
$$ \langle {f}(z),{\xi}(z)\rangle_E=0 \;\text{for all}\; z\in \Tt \setminus (N_f \cup N_\xi) .$$ 
The latter implies $f \in H^2(\Dd,E)$ and 
$$\langle {f}(z),{g}(z)\rangle_E=0\;\text{for almost all}\;\; z \in \Tt \; \text{for all}\; g \in \Pls(\{\xi\},H^2(\Dd,E)).  $$Thus 
\[f \in H^2(\Dd,E)\ominus \Pls(\{\xi\},H^2(\Dd,E)).\qedhere\]\end{proof}
\begin{lemma}
 Let $E$ and $F$ be separable Hilbert spaces, and let $G\in L^\infty(\Tt, \mathcal{B}(F,E))$.  For every $x\in  L^2(\Tt,E)$, the function $Gx$, defined almost everywhere on $\Tt$ by
\[
(Gx)(z) = G(z)(x(z)),
\]
belongs to $L^2(\Tt,E)$.
\end{lemma}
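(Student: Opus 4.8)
The plan is to verify two things: that $Gx$ is a measurable $E$-valued function, and that it is square-integrable, with the $L^2$ norm controlled by $\|G\|_{L^\infty}\|x\|_{L^2}$.

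First I would address the pointwise norm estimate, which is the easy but essential ingredient. For almost every $z\in\Tt$ we have $G(z)\in\mathcal{B}(F,E)$ with $\|G(z)\|_{\mathcal{B}(F,E)}\le \|G\|_{L^\infty(\Tt,\mathcal{B}(F,E))}$, and hence, by the definition of the operator norm,
\[
\|(Gx)(z)\|_E = \|G(z)(x(z))\|_E \le \|G(z)\|_{\mathcal{B}(F,E)}\,\|x(z)\|_F \le \|G\|_{L^\infty}\,\|x(z)\|_F
\]
for almost every $z\in\Tt$. (Here one must be a little careful: $x$ is stated as an element of $L^2(\Tt,E)$ in the excerpt, but the formula $(Gx)(z)=G(z)(x(z))$ only makes sense if $x$ takes values in $F$; I would read the statement with $x\in L^2(\Tt,F)$, which is surely the intended hypothesis, and flag this.) Squaring and integrating over $\Tt$ then gives
\[
\|Gx\|_{L^2(\Tt,E)}^2 = \frac{1}{2\pi}\int_0^{2\pi}\|(Gx)(e^{i\theta})\|_E^2\,d\theta \le \|G\|_{L^\infty}^2\,\frac{1}{2\pi}\int_0^{2\pi}\|x(e^{i\theta})\|_F^2\,d\theta = \|G\|_{L^\infty}^2\,\|x\|_{L^2(\Tt,F)}^2 < \infty,
\]
provided the integrand is measurable.

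The main obstacle is therefore the measurability of $z\mapsto (Gx)(z)$, since $G$ is only assumed to be an essentially bounded \emph{operator-valued} function, and one has to know what ``measurable'' means for such functions and why composition with a measurable $F$-valued function stays measurable. I would invoke the separability of $E$ and $F$: weak and strong measurability coincide (as already used in Definition \ref{a.12}), so it suffices to show that for each fixed $e$ in a countable dense subset of $E$ the scalar function $z\mapsto \langle (Gx)(z),e\rangle_E = \langle x(z), G(z)^* e\rangle_F$ is measurable. Now $z\mapsto x(z)$ is weakly measurable, and $z\mapsto G(z)^* e$ is a measurable $F$-valued function because $G$ is (strongly or weakly) measurable as a $\mathcal{B}(F,E)$-valued function; approximating $x$ a.e. by countably-valued functions $x_n$ and using that $\langle x_n(z), G(z)^*e\rangle_F$ is measurable for each $n$, one passes to the limit to get measurability of $\langle x(z),G(z)^*e\rangle_F$. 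Running this over the countable dense set and appealing to the Pettis measurability theorem yields strong measurability of $Gx$.

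Once measurability is in hand, the norm bound above shows $Gx\in L^2(\Tt,E)$ and in fact that the multiplication operator $x\mapsto Gx$ is bounded from $L^2(\Tt,F)$ to $L^2(\Tt,E)$ with norm at most $\|G\|_{L^\infty(\Tt,\mathcal{B}(F,E))}$, which is presumably the use to which this lemma will be put (e.g. in defining Toeplitz operators with operator symbols as in Theorem \ref{cx*cx}). I expect the measurability step to be the only genuinely delicate point; the rest is a one-line application of the operator-norm inequality and Hölder's (here just monotonicity of the integral).
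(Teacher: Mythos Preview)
Your proof is correct, and the integrability part is exactly the paper's argument: the paper's own proof consists solely of the pointwise bound $\|(Gx)(z)\|_E \le \|G\|_{L^\infty}\|x(z)\|_F$ followed by the integral inequality, with no mention of measurability at all. You are therefore more careful than the paper on the measurability step, and you also correctly flag the typo in the hypothesis (the paper's proof itself writes $\|x(z)\|_F$, confirming that $x\in L^2(\Tt,F)$ is intended).
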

\begin{proof}
For almost all $z\in\Tt$,
\[
\|(Gx)(z)\|_E = \|G(z)x(z)\|_E \leq \|G\|_{L^\infty(\Tt, \mathcal{B}(F,E))} \|x(z)\|_F.
\]
Thus
\begin{align*}
\|Gx\|_{L^2(\Tt,E)}^2 &=\frac{1}{2\pi} \int_0^{2\pi} \|Gx(e^{i\theta})\|_E^2 \ d\theta\\
	&\leq \frac{1}{2\pi} \int_0^{2\pi} \|G\|_{L^\infty(\Tt, \mathcal{B}(F,E))}^2 \|x(e^{i\theta})\|_F^2 \ d\theta\\
    &\leq \|G\|_{L^\infty(\Tt, \mathcal{B}(F,E))}^2 \|x\|^2_{L^2(\Tt,F)} < \infty.
\end{align*} \end{proof}

\begin{definition}\label{Toeplitz} Let $E$ and $F$ be separable Hilbert spaces.
Let $P_+\colon  L^2(\Tt,E)\to H^2(\Dd,E)$ be the orthogonal projection operator. Corresponding to any $G\in L^\infty(\Tt, \mathcal{B}(F,E))$ we define the \emph{Toeplitz operator}  with symbol $G$ to be the operator 
$$ T_G\colon  H^2(\Dd,F)\to H^2(\Dd,E)$$
given by 
$$T_Gx = P_+(Gx) \quad \text{for any}\quad x\in H^2(\Dd,F).$$
\end{definition}
\begin{definition}[\cite{NagyFoias}, p. 190]
	For a separable Hilbert space $E,$ a function $\xi \in H^\infty(\Dd, E)$ will be called \emph{inner} if for almost every $z \in \Tt,$
	$$ \|\xi(z)\|_E=1.$$
\end{definition}
\begin{theorem}\label{cx*cx}
	Let $E$ be a separable Hilbert space, and let $\xi \in H^\infty(\Dd,E).$
Consider the adjoint operator $$C_\xi^*\colon  H^2(\Dd,\we^2E)\to  H^2(\Dd, E).$$
{\rm (i)}  For every $g \in H^2(\Dd,E)$ and $f \in H^\infty(\Dd,E)$,
	$$ C_\xi^* (f \telwe g) = P_+ \alpha_f, $$
where $\alpha_f \in L^2(\Tt,E)$ is defined by 
	$$\alpha_f(\eiu) = \langle f(\eiu), \xi(\eiu)\rangle_E g(\eiu) - \langle g(\eiu), \xi(\eiu)\rangle_E f(\eiu) \quad  $$  for all $\eiu \in \Tt \setminus (N_f\cup N_\xi \cup N_g),$ and 
	$ P_+ \colon  L^2(\Tt,E) \to H^2(\Dd,E)$ is the orthogonal projection.
	 Here $N_f, N_g, N_\xi$ are the singular sets of the functions $f, g$ and $\xi$ respectively. 	

{\rm (ii)} Let $\xi \in H^\infty(\Dd,E)$ be an inner function. Then, for any $g \in H^2(\Dd,E),$
$$C_\xi^* C_\xi g =  P_{+} \alpha_\xi, $$
where $\alpha_\xi= g - \xi \xi^* g$.
Moreover 
$$ C_\xi^* C_\xi g = g -T_{\xi\xi^*}g,$$
where $T_{\xi\xi^*}\colon H^2(\Dd,E)\to H^2(\Dd,E)$ is 
the  Toeplitz operator with symbol $\xi\xi^*$.
\end{theorem}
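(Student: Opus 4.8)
The plan is to obtain both statements from the defining relation of the adjoint, $\langle C_\xi^* u, h\rangle_{H^2(\Dd,E)} = \langle u, C_\xi h\rangle_{H^2(\Dd,\we^2E)}$ for $h \in H^2(\Dd,E)$, specialised to $u = f\telwe g$. First I would record that $C_\xi$ is a bounded operator: by Proposition \ref{xweyh2} and the fact that $\|x\we y\|_{\we^2E} = \|y\we x\|_{\we^2E}$ (antisymmetry of the wedge product, a consequence of Theorem \ref{a.6}), one has $\|C_\xi h\|_{H^2(\Dd,\we^2E)} = \|h\telwe \xi\|_{H^2(\Dd,\we^2E)} \le \|\xi\|_\infty \|h\|_{H^2(\Dd,E)}$, so $C_\xi^*\colon H^2(\Dd,\we^2E)\to H^2(\Dd,E)$ is everywhere defined, and $f\telwe g$ lies in its domain by Proposition \ref{xweyh2} (since $f\in H^\infty(\Dd,E)$, $g\in H^2(\Dd,E)$). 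Using the isometric identification of $H^2(\Dd,\we^2E)$ with a closed subspace of $L^2(\Tt,\we^2E)$ (Remark \ref{H2subsetL2} with $\we^2E$ in place of $E$) together with Lemma \ref{xitelweh21}, applied to each of $\xi\telwe h$ and $f\telwe g$ to identify their boundary functions with $z\mapsto \xi(z)\we h(z)$ and $z\mapsto f(z)\we g(z)$, the right-hand side becomes $\tfrac{1}{2\pi}\int_0^{2\pi}\langle f(\eiu)\we g(\eiu),\, \xi(\eiu)\we h(\eiu)\rangle_{\we^2E}\,d\theta$.

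The key step is the pointwise computation via Proposition \ref{we}: for almost every $z$ on $\Tt$,
\[
\langle f(z)\we g(z),\, \xi(z)\we h(z)\rangle_{\we^2E}
= \det\begin{pmatrix}\langle f(z),\xi(z)\rangle_E & \langle f(z),h(z)\rangle_E\\ \langle g(z),\xi(z)\rangle_E & \langle g(z),h(z)\rangle_E\end{pmatrix},
\]
and expanding this $2\times 2$ determinant and regrouping, using that the $E$-inner product is linear in its first argument, rewrites it as $\langle \alpha_f(z), h(z)\rangle_E$ with $\alpha_f$ exactly as in the statement. Next I would verify that $\alpha_f\in L^2(\Tt,E)$: the coefficient $\langle f,\xi\rangle_E$ is bounded since $f,\xi\in H^\infty$, so $\langle f,\xi\rangle_E\, g\in L^2$; and $|\langle g(\eiu),\xi(\eiu)\rangle_E|\le\|\xi\|_\infty\|g(\eiu)\|_E$ lies in $L^2(\Tt)$ while $f$ is bounded, so $\langle g,\xi\rangle_E\, f\in L^2$. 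Integrating the pointwise identity gives $\langle f\telwe g,\, \xi\telwe h\rangle = \langle \alpha_f, h\rangle_{L^2(\Tt,E)}$; since $h\in H^2(\Dd,E)$ and the embedding $H^2(\Dd,E)\hookrightarrow L^2(\Tt,E)$ is isometric, the right-hand side equals $\langle P_+\alpha_f, h\rangle_{H^2(\Dd,E)}$. As $h$ ranges over $H^2(\Dd,E)$ this yields $C_\xi^*(f\telwe g) = P_+\alpha_f$, proving (i).

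For (ii), I would apply (i) with $f$ replaced by the inner function $\xi\in H^\infty(\Dd,E)$. Then $\alpha_\xi(\eiu) = \|\xi(\eiu)\|_E^2\, g(\eiu) - \langle g(\eiu),\xi(\eiu)\rangle_E\,\xi(\eiu)$, and since $\|\xi(\eiu)\|_E = 1$ almost everywhere and $(\xi\xi^*)(\eiu)x = \langle x,\xi(\eiu)\rangle_E\,\xi(\eiu)$ by Definition \ref{xistar}, this is $\alpha_\xi = g - \xi\xi^* g$. Hence $C_\xi^* C_\xi g = P_+(g - \xi\xi^* g)$. Finally, $\xi\xi^*\in L^\infty(\Tt,\mathcal{B}(E,E))$ because $\|(\xi\xi^*)(z)\| = \|\xi(z)\|_E^2$, so $T_{\xi\xi^*}$ is defined; since $g\in H^2(\Dd,E)$ we have $P_+ g = g$ and $P_+(\xi\xi^* g) = T_{\xi\xi^*}g$ by Definition \ref{Toeplitz}, whence $C_\xi^* C_\xi g = g - T_{\xi\xi^*}g$.

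The only real obstacle is the bookkeeping of boundary-value identifications: one must pass consistently between the three Hardy spaces $H^2(\Dd,E)$, $H^2(\Dd,\we^2E)$ and their $L^2$-counterparts on $\Tt$, and must invoke Lemma \ref{xitelweh21} so that $\langle f\telwe g,\xi\telwe h\rangle$ may legitimately be written as a boundary integral before Proposition \ref{we} is applied pointwise. Once that is in place, the determinant expansion and the two $L^2$-membership checks are routine.
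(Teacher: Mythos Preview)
Your proof is correct and follows essentially the same route as the paper: compute $\langle C_\xi^*(f\telwe g),h\rangle$ via the adjoint relation, pass to the boundary, apply Proposition \ref{we} to the pointwise $\we^2E$ inner product, expand the $2\times 2$ determinant to recognise $\langle\alpha_f,h\rangle_{L^2}$, and then project. You are in fact slightly more careful than the paper in recording why $C_\xi$ is bounded, why $\alpha_f\in L^2(\Tt,E)$, and in invoking Lemma \ref{xitelweh21} for the boundary identification, but the argument is the same.
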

\begin{proof} (i) By Proposition \ref{xweyh2}, $f \telwe g \in H^2(\Dd,\we^2 E).$ Now, for all $f\in H^\infty(\Dd,E),$ and	
	all $g,h \in H^2(\Dd,E)$, we have 
	$$\begin{array}{cllll}
	\langle C_\xi^* (f\telwe g), h \rangle_{H^2(\Dd,E)} &= \langle f \telwe g, C_\xi h\rangle_{H^2(\Dd,\we^2E)}\vspace{2ex} \\
&= \langle f \telwe g, \xi \telwe h \rangle_{L^2(\Tt,\we^2E)} \vspace{2ex}\\
	&= \displaystyle\frac{1}{2\pi} \int_{0}^{2\pi} \langle f(\eiu) \we g(\eiu), \xi(\eiu)\we h(\eiu)\rangle_{\we^2E}\;d\theta,\end{array}$$which, by Proposition \ref{we}, is equal to $$ \displaystyle\frac{1}{2\pi} \int_{0}^{2\pi} \det \begin{pmatrix}
	\langle f(e^{i\theta}), \xi(e^{i\theta}) \rangle_E & \langle f(e^{i\theta}) , h(e^{i\theta}) \rangle_E \\
	\langle g(e^{i\theta}) , \xi(e^{i\theta}) \rangle_E & \langle g(e^{i\theta}) , h (e^{i\theta}) \rangle_E \end{pmatrix}\; d\theta .$$ 
The latter in turn is equal to
$$\begin{array}{lll} 
	& \displaystyle \frac{1}{2\pi} \int_{0}^{2\pi} \left(\langle f(e^{i\theta}), \xi(e^{i\theta}) \rangle_E \langle g(e^{i\theta}) , h (e^{i\theta}) \rangle_E 
-\langle f(e^{i\theta}) , h(e^{i\theta}) \rangle_E\langle g(e^{i\theta}) , \xi(e^{i\theta}) \rangle_E \right)\; d\theta \vspace{2ex} \\
	&= \displaystyle \frac{1}{2\pi} \int_{0}^{2\pi} \left\langle  \langle f(e^{i\theta}), \xi(e^{i\theta}) \rangle_E g(e^{i\theta})
- \langle g(e^{i\theta}) , \xi(e^{i\theta}) \rangle_E f(e^{i\theta}) , h(\eiu)\right\rangle_E \;d\theta, \end{array}$$
which equals
	$$\begin{array}{llll}
	  \displaystyle \frac{1}{2\pi} \int_{0}^{2\pi} \langle \alpha_f(\eiu), h(\eiu) \rangle_E\;d\theta
	&= \langle \alpha_f, h \rangle_{L^2(\Tt,E)} \vspace{2ex}\\
	&= \langle P_{+}(\alpha_f) , h \rangle_{H^2(\Dd, E)},
\end{array}$$
where $$\alpha_f(\eiu)= \langle f(e^{i\theta}), \xi(e^{i\theta}) \rangle_E g(e^{i\theta}) - \langle g(e^{i\theta}), \xi(e^{i\theta}) \rangle_E f(e^{i\theta})$$for all $\eiu \in \Tt \setminus (N_\xi \cup N_f\cup N_g).$ Hence $C_\xi^* (f\telwe g) = P_+\alpha_f$ as required.
	
Let us show (ii).  By Part (i), for all $g, h  \in H^2(\Dd,E)$,
$$\begin{array}{cllll}
\langle C_\xi^* C_\xi g, h \rangle_{H^2(\Dd,E)} &=\langle C_\xi^* (\xi \telwe g), h \rangle_{H^2(\Dd,E)} \\
&= \langle P_{+}(\alpha_\xi), h \rangle_{H^2(\Dd, E)},
\end{array}$$
where $$\alpha_\xi(\eiu)= \langle \xi(e^{i\theta}), \xi(e^{i\theta}) \rangle_E g(e^{i\theta}) - \langle g(e^{i\theta}), \xi(e^{i\theta}) \rangle_E \xi(e^{i\theta})$$
for all $\eiu \in \Tt \setminus (N_\xi \cup N_g\cup N_h).$

Since $\xi$ is inner, $\|\xi(\eiu)\|_E=1$ almost everywhere on $\Tt,$ and so,
 $C_\xi^* C_\xi g = P_+ \alpha_\xi,$ where $\alpha_\xi = g- \xi\xi^* g.$ Hence $$ C_\xi^* C_\xi g = P_+(g-\xi\xi^* g)=g - T_{\xi\xi^*}g,$$
where $T_{\xi\xi^*}g = P_+(\xi\xi^*g)$ is the  Toeplitz operator with symbol $\xi\xi^*$.
\end{proof}

\begin{example} {\rm
There exists an inner function $\xi \in H^\infty(\Dd,\Cc^2)$ such that, for some \newline $h\in H^2(\Dd,\Cc^2),$ $C_\xi^*C_\xi h $ is not in the pointwise orthogonal complement of $\xi$ in $E.$ 

	Let $\xi \in H^\infty(\Dd,\Cc^2)$ be an inner function and let $h \in H^2(\Dd,\Cc^2).$ Let $\xi,h$ be given by
$$ \xi(z) =\frac{1}{\sqrt{2}} \begin{pmatrix}
1 \\ z
\end{pmatrix}, \quad h(z)= \begin{pmatrix}
1 \\1 
\end{pmatrix}\; \text{for all}\; z \in \Dd.$$ By Theorem \ref{cx*cx},
$$ C_\xi^* C_\xi h = P_+ \alpha,$$ where, 
 for all $z \in \Tt,$
$$\alpha (z) =\displaystyle\begin{pmatrix}
1 \\1 
\end{pmatrix} -\left\langle\begin{pmatrix}
1 \\1 
\end{pmatrix} , \frac{1}{\sqrt{2}} \begin{pmatrix}
1 \\ z
\end{pmatrix}\right\rangle_{\Cc^2}  \frac{1}{\sqrt{2}} \begin{pmatrix}
1 \\ z
\end{pmatrix} . $$
Calculations yield, for all $z \in \Tt,$
$$ \begin{array}{clllll}	
\alpha(z)&=   \begin{pmatrix}
1 \\1 
\end{pmatrix} - \frac{1}{\sqrt{2}} (1+\bar{z})  \frac{1}{\sqrt{2}} \begin{pmatrix}
1 \\ z
\end{pmatrix} \vspace{2ex}\\
&=\displaystyle \begin{pmatrix}
1 \\1 
\end{pmatrix} - \frac{1}{2} \begin{pmatrix}
1 + \bar{z} \\ z(1+\bar{z}) 
\end{pmatrix}  \end{array}
\vspace{2ex}$$ 
$$\begin{array}{clll}
& = \displaystyle \frac{1}{2} \begin{pmatrix}
1  -\bar{z} \\ 1- z 
\end{pmatrix}. 
\end{array}$$ 
Thus $$ (P_+ \alpha)(z) = \displaystyle\frac{1}{2} \begin{pmatrix}
1 \\ 1 -z
\end{pmatrix}\quad \text{for all}\quad z \in \Tt.$$
The latter expression is not in the pointwise orthogonal complement of $\xi$ in $\Cc^2,$ since for all $z \in \Tt,$
$$ \begin{array}{clllll}
\displaystyle \left\langle \frac{1}{2} \begin{pmatrix}
1 \\ 1-  z 
\end{pmatrix} ,  \frac{1}{\sqrt{2}} \begin{pmatrix}
1 \\ z
\end{pmatrix} \right\rangle_{\Cc^2}  &= \displaystyle\frac{1}{2\sqrt{2}} \begin{pmatrix}
1 & \bar{z}
\end{pmatrix} \begin{pmatrix}
1 \\ 1-  z 
\end{pmatrix} \vspace{2ex}\\
&= \displaystyle\frac{1}{2\sqrt{2}} (1+\bar{z}-|z|^2) = \displaystyle\frac{1}{2\sqrt{2}} \bar{z} \neq 0.

\end{array} $$
}
\end{example}
\begin{lemma}\label{cxipartiso}
Let $E$ be a separable Hilbert space. For every inner function \linebreak$\xi \in H^\infty(\Dd, E),$ 
$$\{ x \in H^2(\Dd,E): \|C_\xi x\|_{H^2(\Dd,\we^2E)} = \|x\|_{H^2(\Dd,E)}\} =\Poc(\{\xi\}, H^2(\Dd,E)).  $$ 
\end{lemma}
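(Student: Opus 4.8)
The plan is to express the Hardy-space norm of $C_\xi x$ through a pointwise computation on $\Tt$ and then integrate. First I would note that for any $x \in H^2(\Dd,E)$ the function $C_\xi x = \xi\telwe x$ lies in $H^2(\Dd,\we^2 E)$ by Proposition \ref{xweyh2}, since $\xi\in H^\infty(\Dd,E)$, and that by Lemma \ref{xitelweh21} its radial-limit function agrees almost everywhere on $\Tt$ with $\tilde\xi\we\tilde x$. Combining this with the isometric identifications of $H^2(\Dd,E)$ and $H^2(\Dd,\we^2E)$ with closed subspaces of $L^2(\Tt,E)$ and $L^2(\Tt,\we^2E)$ respectively (Remark \ref{H2subsetL2}), and writing $\xi$, $x$ also for their boundary functions, one obtains
\[
\|C_\xi x\|_{H^2(\Dd,\we^2 E)}^2 = \frac{1}{2\pi}\int_0^{2\pi}\|\xi(e^{i\theta})\we x(e^{i\theta})\|_{\we^2E}^2\,d\theta
\quad\text{and}\quad
\|x\|_{H^2(\Dd,E)}^2=\frac{1}{2\pi}\int_0^{2\pi}\|x(e^{i\theta})\|_E^2\,d\theta .
\]

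Next I would use that $\xi$ is inner, so $\|\xi(e^{i\theta})\|_E=1$ for almost every $\theta$ and hence $\{\xi(e^{i\theta})\}$ is an orthonormal set in $E$ for almost every $\theta$. Lemma \ref{weon} applied with $j=1$ and $u_1=\xi(e^{i\theta})$ then gives, for almost every $\theta$,
\[
\|\xi(e^{i\theta})\we x(e^{i\theta})\|_{\we^2E}^2 = \big\|x(e^{i\theta}) - \langle x(e^{i\theta}),\xi(e^{i\theta})\rangle_E\,\xi(e^{i\theta})\big\|_E^2 = \|x(e^{i\theta})\|_E^2 - |\langle x(e^{i\theta}),\xi(e^{i\theta})\rangle_E|^2 .
\]
Integrating over $\Tt$ and comparing with the first display yields
\[
\|C_\xi x\|_{H^2(\Dd,\we^2 E)}^2 = \|x\|_{H^2(\Dd,E)}^2 - \frac{1}{2\pi}\int_0^{2\pi}|\langle x(e^{i\theta}),\xi(e^{i\theta})\rangle_E|^2\,d\theta .
\]
Since the last integrand is nonnegative, the equality $\|C_\xi x\|_{H^2(\Dd,\we^2E)}=\|x\|_{H^2(\Dd,E)}$ holds if and only if $\langle x(e^{i\theta}),\xi(e^{i\theta})\rangle_E=0$ for almost every $\theta$, that is, if and only if $x\in\Poc(\{\xi\},H^2(\Dd,E))$ by Definition \ref{POC-def}. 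This is precisely the claimed equality of sets.

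I do not expect a genuine obstacle here: the computational content is carried by Lemma \ref{weon} (the Gram-determinant identity, already proved) and by Proposition \ref{xweyh2} (which places $C_\xi x$ in $H^2(\Dd,\we^2E)$). The only point deserving explicit mention is the passage from the $H^2$ norm of $C_\xi x$ on $\Dd$ to the $L^2(\Tt,\we^2E)$ norm of its boundary function; this is justified by Lemma \ref{xitelweh21} together with the isometry $f\mapsto\tilde f$ recorded in Remark \ref{H2subsetL2}.
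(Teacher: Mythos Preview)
Your proof is correct and follows essentially the same route as the paper: pass to boundary values via the isometric embedding of $H^2$ into $L^2(\Tt,\cdot)$, compute $\|\xi(e^{i\theta})\we x(e^{i\theta})\|^2=\|x(e^{i\theta})\|^2-|\langle x(e^{i\theta}),\xi(e^{i\theta})\rangle|^2$ using that $\xi$ is inner, integrate, and conclude. The only cosmetic difference is that the paper obtains the pointwise identity directly from the $2\times 2$ Gram determinant of Proposition~\ref{we}, whereas you invoke the equivalent Lemma~\ref{weon}.
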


\begin{proof}
	By Remark \ref{H2subsetL2}, for every $x\in H^2(\Dd,E),$ $\|x\|_{H^2(\Dd,E)}=\|x\|_{L^2(\Tt,E)}.$ Hence 
$$\{x\in H^2(\Dd,E):\|C_\xi x\|_{H^2(\Dd,\we^2E)}^2=\|x\|_{H^2(\Dd,E)}^2 \}= \{ x \in H^2(\Dd,E): \|C_\xi x\|_{L^2(\Tt,\we^2E)}^2=\|x\|_{L^2(\Tt,E)}^2\}. $$

	By Proposition \ref{we}, the latter set is equal to 

$$\begin{array}{llll}
		& \left\{ x \in H^2(\Dd,E): \displaystyle \frac{1}{2\pi}\int_0^{2\pi} \det \begin{pmatrix}
		\langle \xi(\eiu) , \xi(\eiu) \rangle_E & \langle \xi(\eiu), x(\eiu) \rangle_E\\
		\langle x(\eiu) , \xi(\eiu) \rangle_E & \langle x(\eiu), x(\eiu) \rangle_E 
		\end{pmatrix}\;d\theta  \right.\vspace{2ex}\\ &\hspace{20ex}\left.=  \displaystyle\frac{1}{2\pi}  \int_0^{2\pi} \|x(\eiu)\|_E^2\;d\theta  \right\}.\end{array}$$
Since $\xi$ is inner, $\| \xi(\eiu)\|_E=1$ almost everywhere on $\Tt,$ hence the last set is equal to 		
		\[\begin{array}{lll}  &\left\{ x \in H^2(\Dd,E):  \displaystyle\frac{1}{2\pi}\int_0^{2\pi} (\|x(\eiu)\|_E^2- | \langle \xi(\eiu), x (\eiu) \rangle_E|^2)\;d\theta  \right. \vspace{2ex} \\ & \left. \hspace{20ex}=\displaystyle \frac{1}{2\pi}\int_0^{2\pi} \|x(\eiu)\|_E^2 \; d\theta  \right\} \vspace{2ex} \\
		&=  \left\{ x \in H^2(\Dd,E): \displaystyle \frac{1}{2\pi}\int_0^{2\pi} | \langle \xi(\eiu), x (\eiu) \rangle_E|^2 \;d\theta    = 0 \right\}\vspace{2ex}\\
		&=  \{ x \in H^2(\Dd,E) : {\xi}(\eiu)\perp {x}(\eiu) \;  \text{almost everywhere on}\; \Tt\} \vspace{2ex}\\
		&=\Poc(\{\xi\}, H^2(\Dd,E)).\end{array} 
\]\end{proof}

The following example illustrates the fact that, although  $C_\xi$ is an isometry on \linebreak$\Poc\{\xi,H^2(\Dd,\Cc^2)\},$ it is not a partial isometry on $ H^2(\Dd,\Cc^2)$.
\begin{example}
{\rm $C_\xi^* C_\xi$ fails to be a projection for some inner function 
$\xi \in H^\infty(\Dd,\Cc^2).$
Let us calculate $C_\xi^*C_\xi$ for 
$\xi(z)= \frac{1}{\sqrt{2}} \begin{pmatrix}
1 \\ z
\end{pmatrix},\; z \in \Dd$. 
By Theorem \ref{cx*cx}, for $h \in H^2(\Dd,\Cc^2)$
$$ C_\xi^* C_\xi h = P_+\alpha, $$ 
where, for all $z \in \Tt,$ $\alpha(z)$ is given by
$$
\begin{array}{clll}
\alpha(z) &= h(z) - \langle h(z) , \xi(z)\rangle_E \xi(z) \vspace{2ex}\\

&= \begin{pmatrix}
h_1(z) \\ h_2(z) 
\end{pmatrix}- \frac{1}{\sqrt{2}} (h_1(z) + \bar{z}h_2(z)) \frac{1}{\sqrt{2}} \begin{pmatrix}
1 \\ z
\end{pmatrix}\vspace{2ex}\\\end{array}$$
$$
\begin{array}{ll}
&= \begin{pmatrix}
h_1(z) - \frac{1}{2}(h_1(z)+ \bar{z}h_2(z))\\
h_2(z)- \frac{1}{2} (zh_1(z)+h_2(z)
\end{pmatrix}\vspace{2ex} \\

&= \frac{1}{2}  \begin{pmatrix}
h_1(z) - \bar{z}h_2(z)\\
h_2(z)-  zh_1(z)
\end{pmatrix}.

\end{array}$$ 

\noindent Thus $$ P_+\alpha = \frac{1}{2}  \begin{pmatrix}
h_1 - S^* h_2\\
-  S h_1 + h_2
\end{pmatrix},$$ where $S, S^*$ denote the shift and the backward shift operators on $H^2(\Dd,\Cc)$ respectively. Hence

\begin{equation}\label{proshift}
C_\xi^* C_\xi = \displaystyle\frac{1}{2} \begin{pmatrix}
1 & -S^* \\ -S &1
\end{pmatrix}.
\end{equation}

\noindent By equation (\ref{proshift}), 
 $$ \begin{array}{cllllll}

(C_\xi^* C_\xi)^2 &= \displaystyle \frac{1}{4}
\begin{pmatrix}
1 + S^*S & -2S^* \\ -2S & SS^*+1
\end{pmatrix}\vspace{2ex} \\ 

&= \displaystyle \frac{1}{4}\begin{pmatrix}
2 & -2S^* \\ -2S & 2- P_0 
\end{pmatrix}\vspace{2ex} \\ 

&= \displaystyle \frac{1}{2}\begin{pmatrix}
1 & -S^* \\ -S & 1-\frac{1}{2} P_0 
\end{pmatrix}\neq C_\xi^* C_\xi,

\end{array}$$ since $SS^*=1-P_0,$ where 
$P_0 (\sum_{n=0}^\infty a_n z^n) = a_0.$

\noindent Consequently $C_\xi^* C_\xi$ is not a projection and hence $C_\xi$ is not a partial isometry on $H^2(\Dd,\Cc^2)$. 
}
\end{example}

\section{Declarations}

 Chiotis was supported by a PhD studentship from the School of Mathematics, Statistics and Physics of  Newcastle University.\\
Lykova and Young were partially supported by the Engineering and Physical Sciences grant EP/N03242X/1.\\

We are very grateful to a referee for numerous improvements of our first version of the paper and for suggestions of missing references.\\

{\em Data availability statement:}  there are no data associated with this paper.\\

\end{document}